\tikzset{thick/.style={line width=1pt}}
\newtheorem{thm}{Theorem}[section]
\newtheorem{cor}[thm]{Corollary}
\newtheorem{lem}[thm]{Lemma}
\theoremstyle{definition}
\numberwithin{equation}{section}
\newcommand{\Z}{\mathbb{Z}}
\newcommand{\N}{\mathbb{N}}
\newcommand{\R}{\mathbb{R}}
\newcommand{\Q}{\mathbb{Q}}
\newcommand{\sS}{\mathcal S}
\newcommand{\sV}{\mathcal V}
\newcommand{\sW}{\mathcal W}
\def\natural{\hbox{\rm\setbox1=\hbox{I}\copy1\kern-.45\wd1 N}}
\begin{document}


\baselineskip=17pt




\title[Infinite Transformations  ] {On Infinite Transformations with Maximal Control of Ergodic Two-fold Product Powers}

\author[T. M. Adams]{Terrence M. Adams}

\address{9161 Sterling Dr.\\ Department of Defense\\ Laurel, MD 20723}

\email{tmadam2@tycho.ncsc.mil}

\email{ }

\author[C. E. Silva]{Cesar E. Silva}

\address{Department of Mathematics and Statistics\\ Williams College\\ Williamstown, MA 01267} 

\email{csilva@williams.edu}

\subjclass[2010]{Primary 37A40; Secondary
37A05} 
\keywords{Infinite measure-preserving, ergodic, rationally ergodic, rank-one}


\begin{abstract}

We study the rich behavior of ergodicity and conservativity of 
Cartesian products of infinite measure preserving transformations.  
A class of transformations is constructed such 
that for any subset $R\subset \Q\cap (0,1)$ there 
exists $T$ in this class such that $T^p\times T^q$ is ergodic if and only 
if $\frac{p}{q} \in R$.  This contrasts with the finite measure 
preserving case where $T^p\times T^q$ is ergodic for all nonzero 
$p$ and $q$ if and only if $T\times T$ is ergodic.  
We also show that our class is rich in the behavior of conservative 
products.  

For each positive integer $k$, a family of rank-one infinite measure 
preserving transformations is 
constructed which have ergodic index $k$, but 
infinite conservative index.

\end{abstract}

\subjclass[2010]{Primary 37A25; Secondary 28D05}

\keywords{Ergodic index, Rank-One transformations}
\maketitle


\section{Introduction}

We consider measure spaces that are standard Borel spaces $(X,\sS,\mu)$ with a 
nonatomic $\sigma$-finite Borel measure $\mu$. A transformation is a
measurable map $T:X\to X$. We assume all our transformations are 
invertible mod $\mu$.  $T$ is measure preserving if $\mu(T^{-1}A)=\mu(A)$ for 
every measurable set $A$. The transformation $T$ is conservative if
for all sets of positive measure $A$ there exists a positive integer $n$ such that
$\mu(T^{-n}A\cap A)>0$, and it is ergodic if whenever a measurable set $A$ 
satisfies $T^{-1}A=A$ mod $\mu$, then $\mu(A)=0$ or $\mu (X\setminus A)=0$.
Since we assume our measures are nonatomic and the transformations are invertible,
ergodicity implies conservativity.

An infinite (or finite) measure-preserving transformation $T$ is weakly mixing
if for every ergodic finite measure-preserving transformation $S$, the 
transformation $T\times S$ is ergodic. When $T$ is finite measure-preserving,
if it is weakly mixing, then $T\times T$ is ergodic. As was shown by Aaronson, Lin, and Weiss 
\cite{AaLiWe79} this is no longer  the case in infinite measure. Furthermore, Kakutani and Parry \cite{KaPa63}  
had proved  earlier that there exist infinite measure-preserving transformations such that
$T\times T$ is ergodic but $T\times T\times T$ is not; such a transformation
is said to have {\bf ergodic index $2$}, and similarly one defines
{\bf ergodic index $k$}. They also constructed   infinite measure-preserving 
transformations $T$  where  all
finite Cartesian products of $T$  are ergodic (these transformations are said to have to have {\bf infinite ergodic index}).
A transformation $T$  is defined to be
{\bf power weakly mixing} if all its finite Cartesian products of nonzero powers  
are ergodic, \cite{DGMS99}.  Friedman and the authors proved in 
\cite{AdFrSi01} that there exist infinite ergodic index  (infinite measure-preserving) transformations
 such that $T\times T^2$ is not ergodic (in fact, not conservative), so in particular not
power weakly mixing.
It is now known that each of these notions is different
and that there are counterexamples in rank-one. In fact, there exist counterexamples
for actions of more general groups; the reader may refer to the surveys \cite{Da08}, \cite{DaSi09}
for actions of countable Abelian groups and to \cite{IaKaSi05}, \cite{DaPa11}  for flows.
More recently, the authors in \cite{JoSa}  investigate sets of conservative directions for $\Z^d (d>1),$  infinite measure-preserving actions, while we are only focused on integer actions. 

In this paper we continue the study of the unusual behavior of
ergodicity of Cartesian products in the case of  infinite measure-preserving
transformations.  All of our examples are rank-one and obtained by the 
technique of ``cutting and stacking.'' In the first part 
of the  paper, we define a   family of rank-one  infinite measure 
preserving transformations such that for each subset 
$R\subset \Q^1_0=\Q\cap (0,1)$, there exists a transformation $T$ in the family such that 
$T^p\times T^q$ is ergodic if and only if $\frac{p}{q} \in R$.  
($\Q$ represents the set of rationals in the real line.) 
We also show that given any subsets $R_1\subset R_2\subset \Q^1_0$, 
there exists $T$ in this family such that 
$T^p\times T^q$ is conservative ergodic for $\frac{p}{q} \in R_1$, 
$T^p\times T^q$ is conservative, but not ergodic for 
$\frac{p}{q}\in R_2\setminus R_1$, and $T^p\times T^q$ is not conservative 
for $\frac{p}{q}\in \Q\setminus R_2$. 

Milnor's notion of directional dynamics for $\Z^d$ actions can be applied 
in a similar manner to this setting.  See \cite{JoSa} for a complete definition. 
We say $T$ is $(r,2)$-conservative if for every $\varepsilon>0$, 
there exists $(p,q)\in \Z^2$ in the $\varepsilon$-strip around the line 
through the origin of slope $r$ such that $T^p \times T^q $ is conservative. 
The results of this paper show for any subset $R_1\subset \Q^1_0$ of rational directions, 
there is a rank-one infinite measure preserving transformation $T$ 
such that $T$ is $(\frac{p}{q},2)$-conservative, if and only if $\frac{p}{q}\in R_1$. 
In this paper, we do not consider the case of irrational directions.


  In the last section, we construct 
infinite measure-preserving transformations with ergodic index $k\in \N$, but 
with {\bf infinite conservative index }
(i.e., conservative $k$-fold Cartesian product for all $k\in\N$).  As in the paper 
by Kakutani and Parry \cite{KaPa63}, the ergodicity of the Cartesian 
products of the transformations constructed here can be characterized 
by the convergence or divergence of a special series.  
However our transformations have zero Krengel entropy and are of a different 
nature than those constructed in Kakutani and Parry \cite{KaPa63}  and 
\cite{AaLiWe79}, as those transformations are infinite Markov shifts that
are ergodic whenever they are conservative.

\section{Two-fold Products}

We construct our examples using the method of cutting and 
stacking.  We are able to restrict to the situation where 
we cut each column into four subcolumns of equal width and 
place a variable amount of spacers on the subcolumns.  
Since we make only four cuts at each stage, all of our 
transformations will be $\frac14$-rigid.  Thus, all Cartesian 
products of $T$ with itself will be conservative \cite[Corollary 1.4]{AdFrSi97}.
(A rank-one transformation whose two-fold Cartesian product is not conservative
was constructed in \cite{AdFrSi01}.)
When verifying that a transformation is conservative ergodic, 
we will show the equivalent 
property that for all sets $A$ and $B$  of positive measure, 
there is an integer $n>0$ such that $\mu(T^n A \cap B)>0$.

Before we prove our main result concerning this family, we will give 
three results which relate the placement of spacers to the 
ergodicity or conservativity of the Cartesian products.  
Since $T^p\times T^q$ is ergodic precisely when $T^q\times T^p$ is 
ergodic, the results extend to the set $\Q\cap (0,\infty)\setminus \{1\}$.

\subsection{Constructions}

We first recall the class of rank-one transformations constructed by  Friedman and the authors
in \cite{AdFrSi01}.  Let $a_n$, $b_n$, $c_n$ and $d_n$ be sequences of positive integers.  
We inductively define a sequence of columns $\{G_n\}$. Set $G_0$ to consist of the unit interval 
of height $H_0=1$. Assuming that   $G_n$ is a column of height $H_n$, form $G_{n+1}$ by 
cutting $G_n$ into four subcolumns of equal width and placing $a_n$, 
$b_n$, $c_n$ and $d_n$ spacers on the first, second, third and fourth 
subcolumns, respectively, and then stacking the subcolumns from left to right. Then 
this defines a Lebesgue measure-preserving transformation 
on a subset of $\R$.  Figure 1 represents $G_n$.  
Define $p_n=H_n+a_n$, $\ell_n=H_n+b_n$, 
$q_n=H_n+c_n$, $m_n=H_n+d_n$ and $h_{n+1}=p_n + \ell_n + q_n + H_n$ 
for $n\in \N$.  
Denote the transformation by $T=T_v$ where 
$v=\{(p_n,\ell_n,q_n,m_n): n\in \N \}$.  Let $\sW$ be the set of 
$v=\{(p_n,\ell_n,q_n,m_n): n\in \N \}$ satisfying 
$$\lim_{n\to \infty} \frac{p_n}{h_n} = \infty,$$
and $\ell_n$ and $m_n$ are chosen so that 
$$\ell_n > n\ (p_n + q_n + 2h_n )$$
and 
\[m_n > n\ (p_n+q_n+H_n+\ell_n)=n\  h_{n+1}.\]
Define $\sV=\{v\in \sW : p_n\leq q_n\}$. 
Much of our attention will focus on transformations $T_v$ 
such that $v\in \sV$. 
In this case, we still have freedom to choose the ratio $\frac{p_n}{q_n}$ as 
any rational in $(0,1)$.

\begin{figure}
    \def\hsq{4}
    \def\w{4}
    \def\hsm{1.3}

    \def\an{1}
    \def\bn{3}
    \def\cn{2}
    \def\dn{5}

  \centering

  \begin{tikzpicture}[scale=0.8]
    \draw (0,0) rectangle (\w,\hsq);
    \draw (0,\hsm) -- (\w,\hsm);
    \foreach \x in {1,...,\w} {
      \draw (\x,0) -- (\x,\hsq);
    }

    \draw[decorate,xshift=1.5mm,<->]
      (\w,0) -- node[midway,right] {$h_n$} (\w,\hsm);
    \draw[decorate,xshift=9mm,<->]
         (\w,0) -- node[midway,right] {$H_n$} (\w,\hsq);

    \draw (0,\hsq) rectangle node {$a_n$} ($(1,\hsq+\an)$);
    \draw (1,\hsq) rectangle node {$b_n$} ($(2,\hsq+\bn)$);
    \draw (2,\hsq) rectangle node {$c_n$} ($(3,\hsq+\cn)$);
    \draw (3,\hsq) rectangle node {$d_n$} ($(4,\hsq+\dn)$);
  \end{tikzpicture}

  \caption{Column $G_n$ \label{fig:} }
\end{figure}
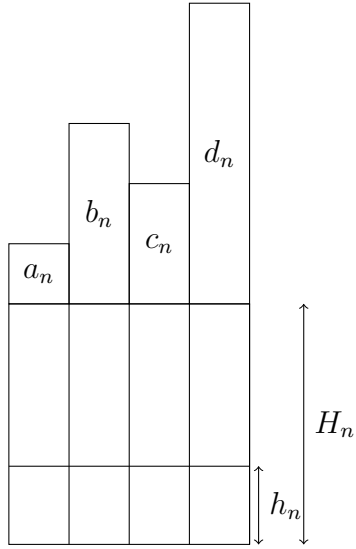



We recall the following theorem proved in \cite{AdFrSi01}.

\begin{thm}[\cite{AdFrSi01}] Let  $T=T_v$ with 
$v=\{(p_n,\ell_n,q_n,m_n):n\in \N\}\in \sW$.  Then $T$ has infinite conservative index.
If furthermore $v\in \sV$ and $\{n\in\N: |q_n- 2p_n|< 3 h_n\}$ is finite, then $T\times T^2$ is not
conservative and $T$ is not $2$-recurrent. If in addition $a_n=3h_n$ and $c_n= a_n+1$, then
$T$ has infinite ergodic index. 
\end{thm}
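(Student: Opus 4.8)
The plan is to handle the three assertions separately, in each case translating the statement about products or multiple recurrence into the combinatorics of the ``return times'' that carry one level of a tower back onto itself.

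First I would record the geometry of a single cutting-and-stacking step, since everything rests on it. Following a level $L$ of $G_n$ into $G_{n+1}$, the four quarters of $L$ acquire bases at the heights $0,\ p_n,\ p_n+\ell_n,\ p_n+\ell_n+q_n$ of the new tower. Hence $T^{p_n}$ carries the first quarter of $L$ onto the second, $T^{q_n}$ the third onto the fourth, $T^{\ell_n}$ the second onto the third, and sums of these realize the remaining quarter-to-quarter maps; each such map covers a quarter of $L$, so in particular $\mu(T^{p_n}L\cap L)\ge\tfrac14\,\mu(L)$. Since $p_n\ge H_n\to\infty$ and the levels generate, $T$ is $\tfrac14$-rigid along $(p_n)$. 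For a product set $C=A_1\times\cdots\times A_k$ one then gets $(\mu\times\cdots\times\mu)\big((T\times\cdots\times T)^{p_n}C\cap C\big)=\prod_i\mu(T^{p_n}A_i\cap A_i)\ge(\tfrac14)^k(\mu\times\cdots\times\mu)(C)$, and after approximating general sets by unions of level-products this shows $T\times\cdots\times T$ ($k$ factors) is $(\tfrac14)^k$-rigid, hence conservative, for every $k$; this is the infinite conservative index, and it is exactly Corollary 1.4 of \cite{AdFrSi97}.

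For the non-conservativity of $S=T\times T^2$ I would exploit that, because $\ell_n$ and $m_n$ dwarf $p_n,q_n,h_n$, the return set $R_n=\{p_n,q_n,\ell_n,p_n+\ell_n,\ell_n+q_n,p_n+\ell_n+q_n\}$ of stage $n$ is separated by scale from that of every other stage, and there is a genuine gap between the largest stage-$n$ return ($\le h_{n+1}$) and the smallest stage-$(n+1)$ return ($p_{n+1}\ge H_{n+1}\gg h_{n+1}$, using $m_n>n\,h_{n+1}$). Thus for $S$ to be conservative one would need infinitely many $t$ with $t$ and $2t$ both (near) returns of $L$; the scale gap forces both to lie in a single $R_n$, and the only way to have $t,2t\in R_n$ is $2p_n=q_n$ (every other pairing is excluded by $\ell_n\gg p_n+q_n$). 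The hypothesis $|q_n-2p_n|\ge 3h_n$ for all large $n$ destroys this coincidence, the factor $3h_n$ absorbing the $O(h_n)$ positional ambiguity in the three evaluations involved. Making this quantitative, I would fix a level $L$ of some $G_N$, put $E=L\times L$, and prove $\sum_{t\ge1}\mu(T^tL\cap L)\,\mu(T^{2t}L\cap L)<\infty$; since this sum equals $\sum_{t\ge1}(\mu\times\mu)(E\cap S^{-t}E)$, a.e.\ point of $E$ returns to $E$ only finitely often, so $E$ lies in the dissipative part and $S$ is not conservative. The non-$2$-recurrence of $T$ comes from the identical displacement analysis: $\mu(L\cap T^{-t}L\cap T^{-2t}L)>0$ would again require $t$ and $2t$ to be compatible stage-$n$ returns, i.e.\ $2p_n$ within $O(h_n)$ of $q_n$, which the hypothesis forbids, so the triple intersection is null for every $t\ge1$.

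Finally, with $a_n=3h_n$ and $c_n=a_n+1$ we have $p_n=H_n+3h_n$ and $q_n=p_n+1$ (note $|q_n-2p_n|=p_n-1\ge 3h_n$, consistent with the previous part). To prove infinite ergodic index I would show every finite product $T\times\cdots\times T$ is ergodic; granting the conservativity from the first part, this reduces to finding, for levels $A_i,B_i$ ($1\le i\le k$) of $G_N$, a single $t>0$ with $\mu(T^tA_i\cap B_i)>0$ for all $i$ at once. The mechanism is the unit shearing $q_n-p_n=1$: at each stage one may route a coordinate through the quarters $\{1,2\}$ (net displacement $p_n$) or through $\{3,4\}$ (net displacement $q_n=p_n+1$), and since each level of $G_N$ distributes a fixed fraction (about $\tfrac14$) of its mass into each quarter at each later stage, one may select the sub-population using a prescribed route and thereby inject an independent $0$-or-$1$ relative adjustment between coordinates at each stage of a block $[n,n+m)$. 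Accumulating these over sufficiently many stages cancels the bounded relative height offsets among the $k$ coordinates, producing a common $t$ that maps a definite proportion (at least $(\tfrac14)^{mk}$ of the mass) of each $A_i$ into $B_i$; hence $T\times\cdots\times T$ is ergodic for every $k$. The two serious obstacles are, in the second part, the scale/decoupling estimate guaranteeing that no cross-stage combination of returns accidentally produces a $T^t,T^{2t}$ coincidence and that the overlap sum is genuinely summable (this is where $\ell_n>n(p_n+q_n+2h_n)$ and $m_n>n\,h_{n+1}$ are used, confining returns near a given $t$ to one stage and forcing geometric decay); and, in the third part, the simultaneous controllability argument showing that the $0/1$ adjustments over finitely many stages realize the required relative displacements for all $k$ coordinates at once while a fixed positive fraction of measure survives and the selected sub-populations land cleanly inside the target levels rather than in spacers, which is exactly where both $a_n=3h_n$ and $c_n=a_n+1$ are indispensable.
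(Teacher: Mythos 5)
Two of your three parts track exactly the arguments that are actually visible in this paper (the theorem itself is recalled from \cite{AdFrSi01} without proof, but its ingredients appear here). Your first part --- $\frac14$-rigidity along $(p_n)$ via the quarter-to-quarter maps, then the product estimate and approximation by unions of levels --- is precisely the route the paper takes by citing \cite[Corollary 1.4]{AdFrSi97}. Your second part is the $(p,q)=(1,2)$ case of the paper's Theorem~\ref{nonconserv}: your threshold computation ($t\in(p_n-h_n,p_n+h_n)$ and $2t\in(q_n-h_n,q_n+h_n)$ force $\vert q_n-2p_n\vert<3h_n$) is exactly the paper's comparison of the intervals $K_{1,2}$ and $K_{3,4}$, and the exclusion of all other pairings by the growth of $\ell_n$ and $m_n$ matches the table argument. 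Two remarks there: the paper proves the stronger fact that for $A$ the bottom level of $G_{N+1}$ the simultaneous return set $\Lambda$ is \emph{empty} (by induction on $n$), rather than your Borel--Cantelli summability, which is weaker but suffices; and your claim that returns are ``separated by scale'' into a single stage-$n$ set $R_n$ is literally false (cross-stage compositions such as $p_n+p_{n-1}$ occur), but these lie in the radius-$h_n$ windows and are absorbed by the induction, which your sketch does gesture at. Non-$2$-recurrence then follows as you say.

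The genuine gap is in part (3). Your reduction --- ``this reduces to finding, for levels $A_i,B_i$ of $G_N$, a single $t>0$ with $\mu(T^tA_i\cap B_i)>0$ for all $i$ at once'' --- is invalid as stated. Your routing produces a surviving proportion of only $(\frac14)^{mk}$, where the number of stages $m$ must be at least $\max_i k_i$ and hence depends on the very levels chosen to approximate $E$ and $F$, while the approximation quality was fixed beforehand; a $(T\times\cdots\times T)$-invariant set of intermediate measure is perfectly compatible with every pair of level-products meeting under some iterate, since the intersection can live entirely inside the approximation error. Closing this requires an additional mechanism that you do not supply: either an independence/second-moment argument making the intersection with $E$ proportional to the (small) sweep sets themselves --- this is exactly what the Mixing Lemma and Double Mixing Lemma accomplish in Section 3 of this paper for the ergodic-index family --- or a re-selection of copies of the levels inside a later column that preserves the offsets $k_i$ while improving the density of $E$ and $F$. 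A minor misreading in the same part: $a_n=3h_n$ is not what makes the selected sub-populations ``land cleanly''; its role is to force $\vert q_n-2p_n\vert=p_n-1\geq 3h_n$ (the hypothesis of part (2)) together with $p_n/h_n\to\infty$, while the ergodic-index mechanism needs only the unit shear $q_n=p_n+1$, as your routing itself shows.
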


\subsection{Ergodicity and conservativity of two-fold products}

\begin{thm}
\label{erg}  
Let $p, q$ be natural numbers, and $T=T_v$ with 
$v=\{(p_n,\ell_n,q_n,m_n):n\in \N\}\in \sW$.  
If for each $k,\ell \in \N$, 
$$\{ n\in \N : \frac{q_n+\ell}{q} =\frac{p_n+k}{p}\in \N \}$$
is infinite, then $T^p\times T^q$ and $T^{-p}\times T^q$ 
are conservative ergodic.
\end{thm}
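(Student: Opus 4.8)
The plan is to verify, for the product $S=T^p\times T^q$ on $X\times X$ with product measure $\mu\times\mu$, the conservative–ergodicity criterion recalled above: for all sets $E,F$ of positive measure there is an integer $m>0$ with $(\mu\times\mu)(S^mE\cap F)>0$. First I would reduce to rectangles. Fix $\ve\in(0,\tfrac1{32})$; by the Lebesgue density theorem applied to the refining partitions into levels, there is a stage $N$ and levels $A,B,C,D$ of $G_N$ such that $E$ has density $>1-\ve$ in $A\times B$ and $F$ has density $>1-\ve$ in $C\times D$. Write $a,b,c,d$ for the heights of $A,B,C,D$ in $G_N$, and set $k=c-a$, $\ell=d-b$, so that $C=T^{\,k}A$ and $D=T^{\,\ell}B$.

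Next I would locate the connecting time. The decisive structural fact is that in the single cut $G_n\to G_{n+1}$ every level of $G_n$ splits into four subcolumn copies of equal measure, and the base translations carrying the first subcolumn onto the second, and the third onto the fourth, are exactly $p_n$ and $q_n$ (the bases sit at heights $0,p_n,p_n+\ell_n,p_n+\ell_n+q_n$). Viewing the fixed level $A$ of $G_N$ inside $G_{n+1}$ for $n\ge N$, the portion of $A$ lying in the first subcolumn is \emph{not} a single shrinking sublevel but the union of all first-subcolumn appearances of $A$, and hence has measure exactly $\tfrac14\mu(A)$; moreover $T^{p_n}$ maps it bijectively onto the second-subcolumn appearances, up to a boundary set of negligible measure. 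Since $C=T^{\,k}A$, the map $T^{p_n+k}$ therefore carries $A\cap(\text{first subcolumn})$ onto $C\cap(\text{second subcolumn})$, and likewise $T^{q_n+\ell}$ carries $B\cap(\text{third subcolumn})$ onto $D\cap(\text{fourth subcolumn})$. To realize both with a single power of $S$ I need $m$ with $pm=p_n+k$ and $qm=q_n+\ell$, i.e.\ $\frac{p_n+k}{p}=\frac{q_n+\ell}{q}=m\in\N$; this is precisely the hypothesis, which supplies infinitely many such $n$, and I would pick one with $n\ge N$. When $k$ or $\ell$ is negative I would first add suitable multiples of $p$, respectively $q$, to $k,\ell$ and decrement $m$ accordingly, reducing to the non-negative case covered by the hypothesis.

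Finally I would transfer the density. The source $(A\cap\text{first})\times(B\cap\text{third})$ is a $\tfrac1{16}$-fraction of $A\times B$, the target $(C\cap\text{second})\times(D\cap\text{fourth})$ a $\tfrac1{16}$-fraction of $C\times D$, and $S^m$ maps source onto target measure-preservingly. Because $E$ fills $A\times B$ to density $>1-\ve$, its density on the source exceeds $1-16\ve>\tfrac12$, and similarly $F$ has density $>\tfrac12$ on the target; two subsets of the target each of relative measure exceeding $\tfrac12$ must meet, so $(\mu\times\mu)(S^mE\cap F)\ge(\mu\times\mu)\bigl(S^m(E\cap\text{source})\cap(F\cap\text{target})\bigr)>0$. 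This establishes the criterion for $T^p\times T^q$. The case $T^{-p}\times T^q$ is identical save that in the first coordinate one connects the second subcolumn downward to the first, which replaces $p_n+k$ by $p_n+(a-c)$ and is again furnished by the hypothesis.

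The main obstacle is the second step. One must recognize that a fixed level $A$ meets the first subcolumn of $G_n$ in a \emph{definite} fraction $\tfrac14$ of its measure, through its many first-subcolumn appearances, rather than in a vanishing sliver; it is this that makes the density transfer in the last step succeed. The second point is to \emph{synchronize} the two coordinates by a single integer $m$, which is exactly the arithmetic content of the hypothesis. Once the $\tfrac14$-overlap and the common $m$ are in hand, the remaining estimates are routine.
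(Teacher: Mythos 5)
Your proposal is correct and takes essentially the same route as the paper's own proof: approximate $E$ and $F$ to density $>1-\varepsilon$ by level rectangles $A\times B$ and $C\times D$ of some $G_N$, invoke the hypothesis to get $t_n$ with $pt_n=p_n+k$ and $qt_n=q_n+k'$ so that the first-subcolumn quarter of $A$ and the third-subcolumn quarter of $B$ are carried into $C$ and $D$, and finish with the $\tfrac{1}{16}$-overlap versus $2\cdot\tfrac{1}{32}$-error estimate (your ``two sets of relative measure $>\tfrac12$ must meet'' is the same computation). The only, harmless, deviation is bookkeeping: you normalize negative offsets by adding multiples of $p$ and $q$ to $k,\ell$ and decrementing the common power, whereas the paper simply passes to a larger $N$ so that $k,k'$ are nonnegative.
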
 

\begin{proof} 
Let $E_1$ and $F_1$ each be subsets with positive measure  in the 
product space.  Choose levels $A,B,C$ and $D$ in $G_N$ for some $N\in\N$ 
such that 
\[
\mu \times \mu (E_1\cap (A\times B))> \frac{31}{32} \mu (A)\mu (B) 
\]
\[
\mu \times \mu (F_1\cap (C\times D))> \frac{31}{32} \mu (C)\mu (D). 
\]
Set $E=E_1\cap (A\times B)$ and $F=F_1\cap (C\times D)$.  
Let $k$ and $k^{\prime}$ be  integers such that 
$C=T^kA$ and $D=T^{k^{\prime}}B$.  We may assume both 
$k$ and $k^{\prime}$ are nonnegative by choosing a larger $N$ if needed.  
Choose $n\geq N$ such that 
\[
\frac{q_n+k^{\prime}}{q} =\frac{p_n+k}{p}=t_n
\]
is a positive integer. 
Let $A_n$ denote the part of $A$ which appears in the first subcolumn 
of $G_n$, and let $B_n$ denote the part of $B$ which appears in the 
third subcolumn of $G_n$.  
Then we have 
\[
T^{t_np}A_n = T^{p_n+k}A_n \subset C 
\]
and 
\[
T^{t_nq}B_n = T^{q_n+k^{\prime}}B_n \subset D. 
\]
Since $\mu (A_n)=\mu (B_n) =\frac14 \mu (A)$, then 
\[
\mu \times \mu ((T^p\times T^q)^{t_n}(A\times B) \cap (C\times D))\geq 
\frac{1}{16} \mu (C)\mu (D). 
\]
Therefore, 
\begin{align*}
\mu \times \mu ((T^p\times T^q)^{t_n}(E)\cap F)&\geq \mu \times \mu ((T^p\times T^q)^{t_n}(A\times B)\cap (C\times D))\\ 
&-\mu\times\mu(A\times B\setminus E)-\mu\times\mu(C\times D\setminus F)\\
&>\frac{1}{16} \mu (C)\mu (D)-\frac{1}{32} \mu (A)\mu (B) -\frac{1}{32} \mu (C)\mu (D)
\\&=0.
\end{align*}
showing that $T^p\times T^q$ is conservative ergodic. 

For the case of $T^{-p}\times T^q$, 
let $k$ and $k^{\prime}$ be positive integers such that $C=T^{-k}A$ and 
$D=T^{k^{\prime}}B$.  As in the previous case, we may assume both 
$k$ and $k^{\prime}$ are nonnegative by choosing a larger $N$ if needed. 
Let $A_n$ denote the part of $A$ which appears in the second subcolumn 
of $G_n$, and let $B_n$ denote the part of $B$ which appears in the 
third subcolumn of $G_n$.  
Then we have 
\[
T^{-t_np}A_n = T^{-p_n-k}A_n \subset C 
\]
and 
\[
T^{t_nq}B_n = T^{q_n+k^{\prime}}B_n \subset D. 
\]
A similar argument as before now shows that $T^{-p}\times T^q$ 
is conservative ergodic.
\end{proof} 

The previous proof may be applied to the special case where 
$F_1=E_1$, $C=A$, $D=B$ and $k=k^{\prime}$.  Thus we get the following 
sufficient condition implying $T^p\times T^q$ is conservative.

\begin{cor}
\label{cons}
Let $p, q$ be natural numbers, and $T=T_v$ with 
$v=\{(p_n,\ell_n,q_n,m_n):n\in \N\}\in \sW$.  If the set, 
$$\{ n\in \N : \frac{q_n}{q} =\frac{p_n}{p}\in \N \}$$
is infinite, then $T^p\times T^q$ and $T^{-p}\times T^q$ are conservative.
\end{cor}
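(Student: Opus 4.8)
The plan is to specialize the proof of Theorem~\ref{erg} to the diagonal case, exactly as the remark preceding the statement suggests. Conservativity of $T^p\times T^q$ requires only that for each set $E_1$ of positive measure in the product space there is a single positive integer $n$ with $(\mu\times\mu)\bigl((T^p\times T^q)^{n}E_1\cap E_1\bigr)>0$, so I would run the earlier argument with $F_1=E_1$. First I would take an arbitrary $E_1$ of positive measure and, using that the rectangles $A\times B$ with $A,B$ levels of some $G_N$ generate the product $\sigma$-algebra, choose $N$ and levels $A,B$ in $G_N$ so that $(\mu\times\mu)(E_1\cap(A\times B))>\frac{31}{32}\mu(A)\mu(B)$. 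Setting $E=E_1\cap(A\times B)$ is precisely the instance $C=A$, $D=B$, $k=k'=0$ flagged in the remark.

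Next I would invoke the hypothesis that $\{\,n:\tfrac{q_n}{q}=\tfrac{p_n}{p}\in\N\,\}$ is infinite to pick $n\ge N$ with $t_n:=\tfrac{q_n}{q}=\tfrac{p_n}{p}$ a positive integer (note $t_n\ge 1$ since $p_n>0$). Let $A_n$ be the part of $A$ lying in the first subcolumn of $G_n$ and $B_n$ the part of $B$ in the third subcolumn. The cutting-and-stacking geometry then gives $T^{t_np}A_n=T^{p_n}A_n\subset A$, because $T^{p_n}$ carries the first subcolumn onto the second, and $T^{t_nq}B_n=T^{q_n}B_n\subset B$, because $T^{q_n}$ carries the third subcolumn onto the fourth. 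Since $\mu(A_n)=\mu(B_n)=\tfrac14\mu(A)$ and $T^p\times T^q$ preserves $\mu\times\mu$, the image $(T^p\times T^q)^{t_n}(A\times B)$ meets $A\times B$ in measure at least $\tfrac1{16}\mu(A)\mu(B)$.

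The final estimate is then verbatim the one in Theorem~\ref{erg}: because $E$ fills all but a $\tfrac1{32}$-fraction of $A\times B$ and is transported measure-preservingly,
\[
(\mu\times\mu)\bigl((T^p\times T^q)^{t_n}E\cap E\bigr)>\tfrac1{16}\mu(A)\mu(B)-\tfrac1{32}\mu(A)\mu(B)-\tfrac1{32}\mu(A)\mu(B)=0,
\]
and since $E\subset E_1$ and $t_n\ge 1$ this yields $(\mu\times\mu)\bigl((T^p\times T^q)^{t_n}E_1\cap E_1\bigr)>0$, establishing conservativity. For $T^{-p}\times T^q$ I would instead take $A_n$ in the second subcolumn, so that $T^{-p_n}A_n\subset A$ (the second subcolumn is carried back onto the first), keep $B_n$ in the third subcolumn, and repeat the identical estimate.

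I expect no genuine obstacle here; the only point needing care—and the reason the corollary is stated separately rather than being immediate—is verifying that the diagonal specialization $C=A$, $D=B$ forces $k=k'=0$ and thereby collapses the double index condition $\tfrac{q_n+\ell}{q}=\tfrac{p_n+k}{p}\in\N$ of Theorem~\ref{erg} to the single hypothesis $\tfrac{q_n}{q}=\tfrac{p_n}{p}\in\N$, consistently for both the forward and the $T^{-p}$ versions.
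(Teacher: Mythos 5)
Your proposal is correct and is essentially the paper's own proof: the paper dispatches Corollary~\ref{cons} with exactly the remark that the proof of Theorem~\ref{erg} applies in the special case $F_1=E_1$, $C=A$, $D=B$, $k=k'=0$, which collapses the hypothesis to the single condition $\frac{q_n}{q}=\frac{p_n}{p}\in\N$ infinitely often. Your spelled-out details (first/third subcolumns for the forward case, second subcolumn for $T^{-p}$, and the $\frac{1}{16}-\frac{1}{32}-\frac{1}{32}$ estimate) match the paper's argument verbatim.
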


\medskip

Now we consider the case when the products are not conservative.

\begin{thm}
\label{nonconserv}  
Let $p \neq q$ be natural numbers, and  let $T=T_v$ with 
$v=\{(p_n,\ell_n,q_n,m_n):n\in \N\}$ in $\sV$. 
 If $\{n\in \N : \vert pq_n - qp_n\vert \leq (p+q)h_n \}$  is finite, then  
$T^p\times T^q$ is  not conservative. 
\end{thm}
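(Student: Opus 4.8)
The plan is to produce a set $W$ of positive measure in the product space with $\mu\times\mu\big((T^p\times T^q)^{-t}W\cap W\big)=0$ for every integer $t\ge 1$; by the definition of conservativity (applied to the set $W$) this shows $T^p\times T^q$ is not conservative. Since interchanging the two coordinates conjugates $T^p\times T^q$ to $T^q\times T^p$, and the hypothesis is symmetric in $p$ and $q$, I may assume $p<q$. Fix $N$ large (to be chosen at the end) and set $W=B\times B$, where $B$ is the bottom level of $G_N$. Then $\mu\times\mu(W)=\mu(B)^2>0$, and by measure preservation
\[
\mu\times\mu\big((T^p\times T^q)^{-t}W\cap W\big)=\mu(T^{tp}B\cap B)\,\mu(T^{tq}B\cap B),
\]
so it suffices to rule out, for $N$ large, any $t\ge 1$ making both factors positive.

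The return times of $B$ are governed by a difference set. Let $\Delta_n=\{0,\,p_n,\,p_n+\ell_n,\,p_n+\ell_n+q_n\}$ be the heights at which the four copies of $G_n$ sit inside $G_{n+1}$. Inside $G_M$ the copies of $G_N$ occur exactly at the heights of the sumset $\Delta_N+\dots+\Delta_{M-1}$, whence $\mu(T^sB\cap B)>0$ if and only if $s$ lies in the difference set $\mathcal D_N=\sum_{k\ge N}(\Delta_k-\Delta_k)$. The key structural fact — and the reason the threshold involves $h_n$ and not the larger $H_n$ — is that, because the enormous block of $m_n$ spacers is stacked on the fourth (topmost) subcolumn, every copy of $G_N$ inside $G_n$ has base strictly below $h_n$: the top copy of $G_{n-1}$ in $G_n$ has base $h_n-H_{n-1}$, and induction gives all $G_N$-bases in $G_n$ below $h_n$. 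Consequently any $s\in\mathcal D_N$ whose highest nonzero level is $n$ can be written $s=u+r$ with $u\in(\Delta_n-\Delta_n)\setminus\{0\}$, $|u|\le p_n+\ell_n+q_n<h_{n+1}$, and $|r|<h_n$.

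Now suppose some $t\ge1$ makes both factors positive, and write $tp=u_n+r_p$, $tq=w_m+r_q$ in this form, with highest levels $n$ and $m$. Comparing sizes through $q\,(tp)=p\,(tq)$, one has $tp>p_n-h_n$, $tp<h_{n+1}$, and likewise for $tq$; since $p_n/h_n\to\infty$, a mismatch $n\ne m$ would force $p_{\max(n,m)}<\tfrac{p+q}{p}\,h_{\max(n,m)}$, which fails at large levels, so $n=m$. With $n=m$ the identity $q\,(tp)=p\,(tq)$ becomes
\[
q u_n-p w_n=p r_q-q r_p,\qquad |p r_q-q r_p|<(p+q)h_n .
\]
Because $tp,tq>0$ while $|r_p|,|r_q|<h_n<p_n$, the top terms $u_n,w_n$ are positive, and any term involving $\ell_n$ has size at least $\ell_n>n(p_n+q_n)$, which already makes the left side exceed $(p+q)h_n$; hence $u_n,w_n\in\{p_n,q_n\}$. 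Checking the four possibilities: for $(u_n,w_n)=(p_n,q_n)$ the left side is $-(pq_n-qp_n)$, of absolute value $>(p+q)h_n$ by hypothesis; for $u_n=w_n$ it is $(q-p)u_n\ge(q-p)p_n$; and for the cross term $(u_n,w_n)=(q_n,p_n)$ it equals $qq_n-pp_n\ge(q-p)p_n$, where $p<q$ and $p_n\le q_n$ (from $v\in\sV$) are used. Since $p_n/h_n\to\infty$, each lower bound exceeds $(p+q)h_n$ once $n$ is large, contradicting the displayed inequality. Choosing $N$ beyond all finitely many exceptional levels, no $t\ge1$ survives, so $W$ witnesses that $T^p\times T^q$ is not conservative.

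I expect the crux to be the structural bound confining every copy of $G_N$ in $G_n$ to heights below $h_n$: this is exactly what replaces the naive estimate $H_n$ by $h_n$ and makes the sharp threshold $(p+q)h_n$ correct, and it depends essentially on placing the large $m_n$ spacers on the top subcolumn. The remaining work — forcing the two highest levels to coincide and discarding every $\ell_n$-involving term — is routine given the strong growth built into $\sW$ and $\sV$, but must be handled with care, since the ratio $p/q$ is fixed while the column heights grow super-exponentially.
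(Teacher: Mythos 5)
Your proof is correct and follows essentially the same route as the paper's: the paper likewise takes the product of a bottom level with itself, confines all return times to the bottom $h_n$ levels of each column (your ``bases below $h_n$'' fact, encoded there via the blocks $D_{n,r}$ and a table of return-time intervals of radius $h_n$ around $0$, $p_n$, $q_n$, $\ell_n$ and their sums), and runs the same case analysis, invoking the hypothesis $\vert pq_n - qp_n\vert > (p+q)h_n$ exactly at your $(u_n,w_n)=(p_n,q_n)$ case and using $p_n\leq q_n$ and the growth of $\ell_n$, $m_n$ to kill the remaining cases. Your sumset/difference-set decomposition $tp=u_n+r_p$, $tq=w_n+r_q$ with the identity $qu_n-pw_n=pr_q-qr_p$ is just a tidier bookkeeping of the paper's induction on $\Lambda_n\subset(0,\tfrac1q H_n)$ and its pairwise interval comparisons.
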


\begin{proof}  Since $T^p\times T^q$ is isomorphic to $T^q\times T^p$, it suffices
to prove the theorem assuming $p<q$.  Choose $N$ such that $N>q$, 
$\frac{N-1}{N}> \frac{p}{q}$  and 
$\vert pq_n - qp_n\vert > (p+q)h_n$ for all $n>N$. We may also assume
$p_n> 2 h_n$ for $n>N$.  Let $A$ be the bottom level of 
$G_{N+1}$.  Define 
\[\Lambda=
\{ i\in\N: \mu \times \mu ((T^p\times T^q)^i(A\times A)\cap (A\times A))>0 \}
\]
and for $n\in\N$, set 
\[\Lambda_n=\Lambda \cap (0,\frac1q H_n).
\]
We will prove inductively on $n$  that $\Lambda_n =\emptyset$.  This is 
true for $n=1,\ldots, N$ since $\mu (T^iA\cap A)=0$ for $1\leq i <H_N$.  

Suppose that our assertion holds for $\Lambda_n, n>N$.  We wish to verify that 
$\Lambda_{n+1}=\emptyset.$
Let  $D_n$ denote the union of the bottom $h_n$ levels in $G_n$.  
The set $A$ is  contained in $D_n$ for all $n>N$.  
Let $D_{n,r}$, for $r\in\{1,2,3 , 4\}$, denote  the part of $D_n$ in the $r^{th}$ subcolumn of $G_n$.  
Below we give a table showing the integers $j$,  $0 < j < 
\frac1q H_{n+1}$ such that $T^j D_{n,r_1}\cap D_{n,r_2}\neq\emptyset$.

\bigskip 

\mbox{  
\begin{tabular}{l|l|l|l|l}
 $j$     & $D_{n,1}$ & $D_{n,2}$ & $D_{n,3}$ & $D_{n,4}$ \\ 
\hline 
$T^jD_{n,1}$ & $0$ & $p_n-h_n$ &
$p_n + \ell_n - h_n$ &
$p_n + \ell_n + q_n - h_n$ \\ 
&   $h_n$  &   $p_n + h_n$ &
$p_n + \ell_n + h_n$ &
$p_n + \ell_n + q_n + h_n$\\ 
\hline 
$T^jD_{n,2}$ &   & $0$ &  $\ell_n - h_n$ &
$\ell_n + q_n - h_n$\\ 
&   &   $h_n$  &  $\ell_n + h_n$ &
$\ell_n + q_n + h_n$\\ 
\hline 
$T^jD_{n,3}$ &   &   & $0$ & $q_n - h_n$ \\ 
&   &   &   $h_n$ & $q_n + h_n$ \\ 
\hline 
$T^jD_{n,4}$ &   &   &   &  $0$ \\ 
&&&& $h_n$ 
\end{tabular}
}

\bigskip 

\noindent We will show there is no simultaneous intersection 
for $ip$ and $iq$ among the 10 intervals listed.  Let $K_{s,t}$ denote the interval in the $s^{\text th}$ row and $t^{\text th}$ column in the table;
for example $K_{2,3}$ is the interval $(\ell_n-h_n,\ell_n+h_n)$.  
To obtain the sets of intersection for $T^p$, divide 
the endpoints of the 
intervals in the table by $p$.  To obtain the sets of intersection 
for $T^q$, then divide the endpoints of the intervals by $q$.  

First note that 
\[
\frac{1}{q} H_n > \frac{1}{n-1} m_{n-1} > h_n.
\]
Therefore, by the inductive hypothesis, we may exclude 
 the interval $(0,h_n]$, since there cannot 
be simultaneous intersection on this interval with any other 
interval in the table.  

We next observe that  $\ell_n$ was chosen so that 
\[
\frac{p_n + h_n}{p} < \frac{\ell_n-h_n}{np} < \frac{\ell_n-h_n}{q} .
\]
This means that $\frac{1}{p} K_{1,2}<\frac{1}{q} K_{2,3}$. Therefore there is no positive integer  $i$
such that $ip\in K_{1,2}$ and $iq\in K_{2,3}$. As $p<q$, we also obtain
$\frac{1}{q} K_{1,2}<\frac{1}{p} K_{2,3}$, showing it cannot happen
that $iq\in K_{1,2}$ and $ip\in K_{2,3}$. Furthermore, we note that the left endpoint of each of the 
other intervals with $\ell_n$:  $K_{1,3}, K_{1,4}, K_{2,4}$ is to the right of the left endpoint
of $K_{2,3}$, showing similar inequalities for $K_{1,2}$ and these other intervals.
Similarly we have that   
\[
\frac{q_n + h_n}{p} < \frac{\ell_n-h_n}{np} < \frac{\ell_n-h_n}{q} .
\]
This means $\frac{1}{p}K_{3,4}<\frac{1}{q}K_{2,3}$, and by the same
argument as above it follows that there is no intersection of $K_{3,4}$ with 
 the 
other intervals with $\ell_n$.
 Thus,  it only remains to compare 
the intervals with $\ell_n$ as a summand 
in the endpoints to other intervals with $\ell_n$.  

Now, $\ell_n$ and $n$ are sufficiently large so that 
\begin{align*}
\frac{p_n+\ell_n+q_n+h_n}{q} &< 
\frac{(n-1)(p_n+\ell_n+q_n+h_n)}{np}\\
&<\frac{p_n+\ell_n+q_n+h_n - \frac{1}{n} \ell_n}{p}\\
&< \frac{\ell_n - h_n}{p},
\end{align*}
thus  $\frac{1}{q}K_{1,4}<\frac{1}{p}K_{2,3}$.
As we 
are comparing the worst case, this also shows  
\begin{align*}
\frac{1}{q}K_{1,3}<\frac{1}{p}K_{2,3}, &\text{ and }\frac{1}{q}K_{2,4}<\frac{1}{p}K_{2,3},\\
\frac{1}{q}K_{1,4}<\frac{1}{p}K_{1,3}, &\text{ and }\frac{1}{q}K_{1,4}<\frac{1}{p}K_{2,4}.
\end{align*}
Thus 
 there is no positive  integer  $i$
such that $iq$ is in $K_{1,4}$, 
$K_{1,3}$ or $K_{2,4}$, when $ip$ is in $K_{2,3}$, and such that 
$ip$ is in $ K_{1,3}$ or $K_{2,4}$ when $iq$ is in $K_{1,4}$.
Since $K_{2,3}< K_{1,4}$ and $p<q$ it cannot happen that
$iq\in K_{1,4}$ and $ip\in K_{2,3}$. Also, as
$q_n\geq p_n> 2h_n$ 
it follows that   $K_{1,3}<K_{1,4}, K_{2,4}<K_{1,4}$ and
$K_{2,3}<K_{1,3},K_{2,3}<K_{2,4}$, showing that the cases above
are all the ones we need to consider.

For the last part, we consider the case of simultaneous intersection 
on $K_{1,2}$ and $K_{3,4}$ (this argument will also cover the case of intersection on $K_{1,3}$ 
and $K_{2,4}$).

By the hypothesis,  
\[
\left\vert \frac{q_n}{q} - \frac{p_n}{p} \right\vert > 
\frac{h_n}{q} + \frac{h_n}{p} .
\]
Thus the distance between the centers of $\frac{1}{p}K_{1,2}$ and $\frac{1}{q}K_{3,4}$
is greater than the sum of their radii, showing there cannot be an intersection among these intervals.
Also, the  distance between the centers of $\frac{1}{q}K_{1,2}$ and $\frac{1}{p}K_{3,4}$
is
\[
\left\vert \frac{q_n}{p} - \frac{p_n}{q} \right\vert >\left\vert \frac{q_n}{q} - \frac{p_n}{p} \right\vert > 
\frac{h_n}{q} + \frac{h_n}{p},
\]also greater than the sum of their radii, so by there same argument there is no intersection in this case either. Finally, we note that the intervals
$K_{1,3}$ 
and $K_{2,4}$ have the same radii as $K_{1,2}$ and $K_{3,4}$, and the difference between
their centers is the same as the difference between the centers of $K_{1,2}$ and $K_{3,4}$,
thus the same argument as above applies to them, showing no intersection. 
   This completes the proof  under the assumption that $p<q$.  
   \end{proof}

   \noindent {\bf Remark:} (1). Many of the technical details involving the 
choice of $\ell_n$ and $m_n$ are not important.  Since one has 
freedom in choosing $\ell_n$ and $m_n$ independently of the choice 
of $p_n$ and $q_n$, then one may narrow the cases down to the case 
where block $D_{n,1}$ returns to block $D_{n,2}$ under $T^p$ and 
block $D_{n,3}$ returns to block $D_{n,4}$ under $T^q$.  In this case, 
the choice of $p_n$ and $q_n$ become important.  
(2). In all these constructions it is always the case that $T\times T^{-1}$ is conservative
as $\{a_n\}$ is a partial rigidity sequence for both $T$ and $T^{-1}$, so their product is conservative by \cite{AdFrSi97}.

   \begin{thm}
\label{nonerg}  
Let $p,  q$ be natural numbers, and $T=T_v$ with 
$v=\{(p_n,\ell_n,q_n,m_n):n\in \N\}$ in $\sV$. 
 If $\{n\in \N : \vert pq_n - qp_n\vert =0 \}$ is infinite and 
$\{n\in \N : 0\neq \vert pq_n - qp_n\vert \leq (p+q)h_n \}$ is finite, then  
 $T^p\times T^q$ is  
conservative and not ergodic. 
\end{thm}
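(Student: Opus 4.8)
The plan is to prove the two conclusions by essentially independent arguments, the second one modelled closely on the proof of Theorem~\ref{nonconserv}. Throughout I assume $p<q$ (using $T^p\times T^q\cong T^q\times T^p$), and, since only the direction $p/q$ is at issue, that $\gcd(p,q)=1$.

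Conservativity is the easy half. The first hypothesis says $\frac{p_n}{p}=\frac{q_n}{q}$ for infinitely many $n$; as $\gcd(p,q)=1$ we get $p\mid p_n$ and $q\mid q_n$ for these $n$, so the common value $t_n=\frac{p_n}{p}=\frac{q_n}{q}$ is a positive integer and the hypothesis of Corollary~\ref{cons} is met. Hence $T^p\times T^q$ is conservative. Concretely, for such $n$ the power $i=t_n$ satisfies $ip=p_n$ and $iq=q_n$, so $T^{ip}$ sends the first-subcolumn copy of a level to its second-subcolumn copy while $T^{iq}$ sends the third-subcolumn copy to its fourth, producing the recurrence of $A\times A$.

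For non-ergodicity I would exhibit two positive-measure sets that never meet under the action, which for a conservative transformation is equivalent to non-ergodicity. Fix $N$ large, let $A$ be the bottom level of $G_{N+1}$, and set $E=A\times A$ and $F=(TA)\times A$; the claim is that $\mu\times\mu\big((T^p\times T^q)^{i}E\cap F\big)=0$ for every $i$. The heuristic is precisely the boundary phenomenon of the theorem: Corollary~\ref{cons} makes $A\times A$ recur to itself, but the only recurrences available move the orbit along itself and cannot absorb the unit shift separating $F$ from $E$. To make this rigorous I would rerun the induction of Theorem~\ref{nonconserv}: setting $\Lambda=\{i:\mu\times\mu((T^p\times T^q)^iE\cap F)>0\}$ and $\Lambda_n=\Lambda\cap(0,\frac1qH_n)$, prove $\Lambda_n=\emptyset$ for all $n$ by induction. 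A connection at scale $n$ forces $ip$ into a subcolumn-difference interval of the first-coordinate table shifted by $1$ and $iq$ into one for the second coordinate; the $\ell_n$-gap comparisons and the comparison of $\frac1pK_{1,2}$ with $\frac1qK_{3,4}$ from Theorem~\ref{nonconserv} eliminate every pairing except $(K_{1,2},K_{3,4})$, and the interval $(0,h_n]$ is discarded by the inductive hypothesis exactly as there.

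The one new case, and the crux, is a resonant stage $pq_n=qp_n$: here $(K_{1,2},K_{3,4})$ survives the block test (this is exactly what produced conservativity), so one must check it still yields no connection once the shift is present. The exact alignment requires an integer $i$ with $iq=q_n$, forcing $i=q_n/q=p_n/p=t_n$ and hence $ip=p_n$; but hitting $TA$ in the first coordinate demands $ip=p_n+1$, a contradiction. I expect this resonant-stage bookkeeping to be the main obstacle, for the subtle reason that ``$ip\in K_{1,2}+1$'' is only a block statement of radius $h_n$, so a whole range of $i$ passes the coarse test and one must verify that none realizes an exact level-to-level intersection. This is where the induction does its work: the residual sub-$h_n$ discrepancy is itself a smaller connection, absorbed by the inductive hypothesis, while $\gcd(p,q)=1$ together with the lacunary growth of $\ell_n$ and $m_n$ prevents any large residual from accidentally solving the alignment. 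The non-resonant stages are handled verbatim by the second hypothesis, $|pq_n-qp_n|>(p+q)h_n$ for all large non-resonant $n$, exactly as in Theorem~\ref{nonconserv}; a secondary point to record is the integrality of $t_n$ used above, which is why one takes $\gcd(p,q)=1$.
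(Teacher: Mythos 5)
Your proposal is correct and follows essentially the same route as the paper: conservativity via Corollary~\ref{cons} at the resonant stages, and non-ergodicity by rerunning the induction of Theorem~\ref{nonconserv} on $\Lambda=\{i:\mu\times\mu((T^p\times T^q)^i(A\times A)\cap(TA\times A))>0\}$, where the lone surviving pairing $(K_{1,2},K_{3,4})$ is eliminated because resonant returns align $A\times A$ with itself exactly (your $iq=q_n\Rightarrow ip=p_n\neq p_n+1$ argument is precisely the paper's ``these blocks overlap perfectly,'' and your sub-$h_n$ residual reduction, using $qa=pb\Rightarrow(a,b)=i'(p,q)$, fills in the paper's ``it can be shown''). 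Your normalization $\gcd(p,q)=1$ even patches a detail the paper glosses over, since Corollary~\ref{cons} requires $p_n/p=q_n/q\in\N$, which $pq_n=qp_n$ alone guarantees only in the coprime case.
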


\begin{proof} Conservativity of the product follows from Corollary~\ref{cons}.
Assume $p\leq q$. A similar argument to the one of Theorem~\ref{nonconserv} shows that $T^p\times T^q$ is 
not ergodic.  In this case, let 
\[\Lambda=
\{ i\in\N: \mu \times \mu ((T^p\times T^q)^i(A\times A)\cap (TA\times A))>0 \}.\]
Then it can be shown that $\Lambda =\emptyset$.  
The argument above shows that the only simultaneous return times 
that occur for the blocks $D_{n,r}$ under $T^p$ and $D_{n,s}$  under $T^q$ 
are when $r=s$.  But, when 
$p\leq q$, these blocks overlap perfectly; hence $\Lambda$ is empty.
\end{proof}

\subsection{Corollaries}
First we note that it follows from the proof of Theorems \ref{nonconserv} and \ref{nonerg}
that there is a set of positive measure $A$ such  that for all integers  $i\neq 0$, 
$ \mu \times \mu ((T^{pi}A\times T^{qi}A)\cap (A\times A)=0$. Therefore for all
integers  $i\neq 0$, $ \mu  (T^{pi}A\cap T^{qi}A\cap  A)=0$. In particular, $T$ 
is not multiply recurrent.

A corollary of Theorem \ref{nonerg} is that if $p\leq q$ and 
$\frac{p}{q}$ is not in the closure of 
$\{ \frac{p_n}{q_n} :n\in \N \}$, then 
$T^p\times T^q$ is not ergodic.  Without much effort 
this may be strengthened 
to say that if $\frac{p}{q}$ is not an accumulation  point 
of $\{ \frac{p_n}{q_n} :n\in \N \}$, then 
$T^p\times T^q$ is not ergodic.  
Let $F_N=\{ \frac{p_n}{q_n} : n\geq N\}$ and let 
$\bar{F_N}$ be the closure of $F_N$.  Let 
\[F=\bigcap_{N=1}^{\infty} \bar{F_N} \subset \bar{F_1}.\]  
Thus we have the following corollary.  

\begin{cor} 
If $p\leq q$ and $\frac{p}{q} \notin F$, then $T^p\times T^q$ is not ergodic.
\end{cor}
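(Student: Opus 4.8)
The plan is to deduce this corollary directly from Theorem~\ref{nonerg}, reducing the hypothesis ``$\frac pq\notin F$'' to the two conditions that appear there. First I would unwind the definition of $F=\bigcap_{N}\bar F_N$: saying $\frac pq\notin F$ means there is some $N_0$ with $\frac pq\notin\bar F_{N_0}$, i.e.\ $\frac pq$ is \emph{not} an accumulation point of the sequence $\{\frac{p_n}{q_n}\}$. Equivalently, there is a distance $\delta>0$ and an index $N_0$ such that
\[
\left|\frac{p}{q}-\frac{p_n}{q_n}\right|\ge \delta\qquad\text{for all }n\ge N_0.
\]
The goal is to show this forces the set $\{n: 0\neq|pq_n-qp_n|\le (p+q)h_n\}$ to be finite, so that the second hypothesis of Theorem~\ref{nonerg} holds; the first hypothesis of that theorem (that $|pq_n-qp_n|=0$ infinitely often) is vacuously compatible, since if it fails then $T^p\times T^q$ is not conservative by Theorem~\ref{nonconserv} and hence a fortiori not ergodic, so either way we conclude non-ergodicity.

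The key computation converts the gap $\delta$ into a lower bound on $|pq_n-qp_n|$. Clearing denominators in the displayed inequality gives
\[
|pq_n-qp_n|=q\,q_n\left|\frac pq-\frac{p_n}{q_n}\right|\ge \delta\,q\,q_n
\]
for $n\ge N_0$. So I need $\delta q\,q_n$ to eventually exceed $(p+q)h_n$; equivalently $q_n/h_n\to\infty$ fast enough that $\delta q\,q_n>(p+q)h_n$ for all large $n$. Here I would invoke the defining growth conditions of the class: for $v\in\sV\subset\sW$ we have $\lim_{n\to\infty}p_n/h_n=\infty$, and since $v\in\sV$ forces $p_n\le q_n$, it follows that $q_n/h_n\to\infty$ as well. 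Thus for all $n$ large enough, $q_n/h_n>\frac{p+q}{\delta q}$, which yields exactly $\delta q\,q_n>(p+q)h_n$ and hence $|pq_n-qp_n|>(p+q)h_n$. This shows the set in the second hypothesis of Theorem~\ref{nonerg} is finite.

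With both hypotheses of Theorem~\ref{nonerg} in hand (or the alternative route through Theorem~\ref{nonconserv}), the conclusion that $T^p\times T^q$ is not ergodic follows immediately. I expect the main subtlety to be the logical bookkeeping rather than any hard estimate: one must handle cleanly the dichotomy of whether $pq_n=qp_n$ occurs infinitely often, so that exactly one of the two prior theorems applies, and one must be careful that $\frac pq\notin F$ is genuinely equivalent to $\frac pq$ failing to be an accumulation point (a single $N_0$ with $\frac pq\notin\bar F_{N_0}$ suffices, using that the $\bar F_N$ are nested and closed). The growth estimate itself is routine once $q_n/h_n\to\infty$ is recorded.
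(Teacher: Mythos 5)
Your proof is correct and follows essentially the same route as the paper's own: extract an eventual gap $\delta>0$ from $\frac{p}{q}\notin \bar{F}_{N_0}$, clear denominators to get $\vert pq_n-qp_n\vert \ge \delta q\,q_n$, and use $q_n/h_n\to\infty$ (which the paper asserts directly and you justify from $p_n\le q_n$ and $p_n/h_n\to\infty$) to conclude that $\{n:\vert pq_n-qp_n\vert\le (p+q)h_n\}$ is finite. Your explicit handling of the dichotomy between Theorem~\ref{nonerg} and Theorem~\ref{nonconserv} (noting that under the gap the zero set is automatically finite, so non-conservativity via Theorem~\ref{nonconserv} is what actually delivers non-ergodicity) is a bit more careful than the paper, which leaves that final invocation implicit, but the substance is identical.
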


\noindent
{\bf Proof:} If $\frac{p}{q} \notin F$, then there exists $\epsilon >0$ 
such that $\{ n:\vert \frac{p_n}{q_n} -\frac{p}{q} \vert \leq \epsilon \}$ 
is finite.  Hence $\{ n:\vert qp_n -pq_n\vert \leq \epsilon qq_n\}$ 
is finite.  Since $\frac{q_n}{h_n} \to \infty$, then 
$\{ n:\vert qp_n -pq_n\vert \leq (p+q)h_n \}$ is finite.$\Box$

The condition in Theorem \ref{nonerg} actually says that 
$\frac{p}{q}$ must be well approximated by $\frac{p_n}{q_n}$, 
in a certain sense, for $T^p\times T^q$ to be ergodic.  
With some extra care we may construct sequences $p_n$ and $q_n$ so that 
each point $\frac{p}{q}$ in $(0,1)$ satisfies either the condition 
in Theorem \ref{erg} or the condition in Theorem \ref{nonerg}.  

\begin{cor} 
\label{ratcor} 
For each subset $R$ of $ \Q \cap (0,1)$, 
there exists an infinite 
measure-preserving transformation $T$ such that, for $ p, q\in\N$, 
where $p<q$, 
$T^p\times T^q$ is ergodic if and only if $\frac{p}{q} \in R$.  
\end{cor}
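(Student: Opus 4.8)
The plan is to build $v=\{(p_n,\ell_n,q_n,m_n):n\in\N\}\in\sV$ by dovetailing countably many requirements so that every rational $\frac pq\in(0,1)$ (with $p<q$) satisfies the hypothesis of Theorem~\ref{erg} exactly when $\frac pq\in R$ and the hypothesis of Theorem~\ref{nonerg} exactly when $\frac pq\notin R$; the two theorems then deliver the corollary. For each pair $(p,q)$ with $p<q$ and $\frac pq\in R$, and each $(k,\ell)\in\N\times\N$, introduce an \emph{ergodic requirement} $E(p,q,k,\ell)$, met at a stage $n$ by setting
\[
p_n=p\,t_n-k,\qquad q_n=q\,t_n-\ell,
\]
so that $\frac{p_n+k}{p}=\frac{q_n+\ell}{q}=t_n\in\N$. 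For each rational $\frac ab\in(0,1)\setminus R$ in lowest terms, introduce a \emph{non-ergodic requirement} $N(a,b)$, met by setting $p_n=a\,t_n$, $q_n=b\,t_n$, so that $b p_n=a q_n$; a single such stage forces $pq_n=qp_n$ for every multiple $(p,q)=(da,db)$. There are countably many requirements, so I fix a schedule $n\mapsto\rho(n)$ hitting each requirement infinitely often, and at stage $n$ I realize $\rho(n)$ by the indicated choice of $(p_n,q_n)$.

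The one free parameter at each stage is the multiplier $t_n$, and the whole argument turns on taking it large. I will require
\[
t_n>(n+h_n)^2,
\]
which is legitimate since $h_n$ depends only on the earlier stages; having fixed $(p_n,q_n)$ I then pick $\ell_n>n(p_n+q_n+2h_n)$ and $m_n>n\,h_{n+1}$ as in the definition of $\sW$, and observe that $p_n\le q_n$ holds automatically for large $t_n$ because the governing ratio is $<1$. Thus $v\in\sV$ and $\frac{p_n}{h_n}\to\infty$. The key consequence of the growth of $t_n$ is an \emph{interference estimate}. Fix a target $(p,q)$ and a stage $n$ whose requirement is attached to a rational \emph{different} from $\frac pq$; writing the scheduled values as $p_n=p'\,t_n-k'$, $q_n=q'\,t_n-\ell'$ one computes
\[
pq_n-qp_n=t_n\,(pq'-qp')+(qk'-p\ell').
\]
Here $pq'-qp'\ne0$ because the ratios differ, while $|qk'-p\ell'|$ is controlled by the index of the requirement scheduled at stage $n$, hence is $O(n)$ for the fixed target. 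Since $t_n>(n+h_n)^2$, it follows that $|pq_n-qp_n|>(p+q)h_n$ for all but finitely many such $n$.

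Granting the estimate, the two cases are immediate. Suppose $\frac pq\in R$. For every $(k,\ell)$ the requirement $E(p,q,k,\ell)$ is scheduled infinitely often, and at each such stage $\frac{p_n+k}{p}=\frac{q_n+\ell}{q}\in\N$; thus the hypothesis of Theorem~\ref{erg} holds and $T^p\times T^q$ is conservative ergodic. Suppose instead $\frac pq\notin R$, say $\frac pq=\frac ab$ in lowest terms. The stages scheduled for $N(a,b)$ give $pq_n=qp_n$, so $\{n:pq_n=qp_n\}$ is infinite; and by the interference estimate every stage attached to a different rational satisfies $|pq_n-qp_n|>(p+q)h_n$ for $n$ large, while the stages attached to $\frac ab$ give $pq_n-qp_n=0$. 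Hence $\{n:0\ne|pq_n-qp_n|\le(p+q)h_n\}$ is finite, the hypotheses of Theorem~\ref{nonerg} hold, and $T^p\times T^q$ is not ergodic.

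The step I expect to be the real obstacle is exactly the interference estimate, i.e.\ arranging condition~(b) of Theorem~\ref{nonerg} to hold \emph{simultaneously} for all the infinitely many rationals outside $R$: a stage constructed for one ratio must never leave a small nonzero discrepancy $pq_n-qp_n$ for any other target. This is what the rapid growth $t_n>(n+h_n)^2$ buys, since for a fixed target the threshold $(p+q)h_n$ grows only like $h_n$ whereas a foreign ratio contributes a discrepancy of order $t_n\gg h_n$. I would also note the dual phenomenon that makes the dichotomy clean: at the $E$-stages of a ratio \emph{inside} $R$ the quantity $pq_n-qp_n=qk-p\ell$ can be a small nonzero constant, and these stages are precisely what make condition~(b) of Theorem~\ref{nonerg} fail for that ratio, in harmony with its being ergodic.
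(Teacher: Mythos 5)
Your proof is correct, but it departs from the paper's in two substantive ways. First, the paper schedules \emph{only} ergodic requirements: it enumerates $R=\{r_1,r_2,\dots\}$, partitions $\N$ into infinite sets $N(i)$, one per $r_i$, and uses surjections $k(j),\ell(j)$ hitting every pair infinitely often; ratios outside $R$ are never targeted directly. Instead, separation from the complement is \emph{metric}: at stage $n(i,j)$ the paper demands $q_{n_j}\delta_{i,j}>2h_{n_j}+k(j)+\ell(j)$, where $\delta_{i,j}$ is the minimum distance from $r_i$ to the first $i+j$ elements of $S=(\Q\cap(0,1))\setminus R$, and this forces $\vert pq_n-qp_n\vert>(p+q)h_n$ eventually for every $\frac pq\in S$. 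Consequently the whole set $\{n:\vert pq_n-qp_n\vert\le(p+q)h_n\}$ is finite for $\frac pq\notin R$, so the paper's products off $R$ are in fact \emph{not conservative} (the hypothesis verified is that of Theorem~\ref{nonconserv}, despite the citation to Theorem~\ref{nonerg} in the text). You instead dovetail explicit non-ergodic requirements $p_n=at_n$, $q_n=bt_n$ for each $\frac ab\notin R$, so your products off $R$ are conservative but not ergodic via a genuine application of Theorem~\ref{nonerg}; your construction is essentially the special case $R_1=R$, $R_2=\Q\cap(0,1)$ of the paper's final corollary. Second, your interference estimate replaces the paper's $\delta_{i,j}$ bookkeeping with the integrality gap: for a stage attached to a foreign ratio, $pq_n-qp_n=t_n(pq'-qp')+(qk'-p\ell')$ with $\vert pq'-qp'\vert\ge1$, so $t_n>(n+h_n)^2$ swamps the threshold $(p+q)h_n$; this is arguably cleaner, since the gap of size $\ge t_n-O(n)$ comes for free rather than from a minimum over an initial segment of $S$. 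The only point to make explicit is that the $O(n)$ bound on $\vert qk'-p\ell'\vert$ presumes an enumeration in which the requirement scheduled at stage $n$ has parameters bounded in terms of $n$ (any reasonable enumeration does this), and you correctly note the one delicate same-ratio interaction: non-reduced targets $(da,db)$ pick up exactly zero discrepancy at $N(a,b)$-stages, while small nonzero discrepancies $qk-p\ell$ occur only at $E$-stages of ratios inside $R$, where Theorem~\ref{erg} rather than Theorem~\ref{nonerg} is in force.
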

 
\begin{proof}
Order both $R$ and $S=(\Q\cap (0,1)) \setminus R$ as 
$R=\{ r_1,r_2,\cdots \}$ and $S=\{ s_1,s_2,\cdots \}$.  
Partition the natural numbers 
\[\N =\bigcup_{i=1}^{\infty} N(i)\] 
so that each $N(i)$ is infinite.  Let 
$k:\N \to \N \cup \{ 0\}$ 
and $\ell:\N \to \N \cup \{ 0\}$ be onto maps 
such that 
\[\{ j\in \N : k(j)=k,\ell(j)=\ell \}\ \ 
\hbox{is infinite for each pair}\ \ k,\ell \in 
\N.\]
Let 
\[N(i)=\{ n(i,1)<n(i,2)<\cdots \}.\]
For $i,j\in\N$, let $n_j=n(i,j)$, and write $r_i=\frac{p}{q}$ in reduced form.  
Let 
\[\delta_{i,j}=\min \{ \vert r_i-s_u\vert :1\leq u\leq i+j\}.\]
Choose $p_{n_j}$ and $q_{n_j}$ such that 
\[\frac{q_{n_j}+\ell(j)}{jq}=\frac{p_{n_j}+k(j)}{jp}\]
are both integers and such that 
$$q_{n_j}\delta_{i,j} >2h_{n_j} + k(j) + \ell(j).$$

By Theorem \ref{erg}, since $k(j)$ and $\ell(j)$ 
give all pairs of natural  
numbers for infinitely many $j$, then $T^p\times T^q$ is ergodic.  

Suppose now  $s_v=\frac{p}{q}\notin R$ for some $v$.  
Let 
\[\bar{n}=\max\{ n(i,j): 1\leq i+j\leq v\}.\]
For $n=n(i,j)>\bar{n}$, then 
\[q_{n_j}\delta_{i,j} >2h_{n_j} + k(j) + \ell(j)\]
where $\delta_{i,j}\leq \vert r_i-s_v\vert$.  Thus,
\begin{align*}
\vert p_n -q_n s_v\vert &\geq 
\vert q_n r_i - q_n s_v\vert - \vert p_n - q_n r_i\vert\\
&\geq q_n \delta_{i,j} - (k(j) + \ell(j))\\
&>2h_n\geq (s_v + 1)h_n.
\end{align*}
Since this is true for all $n>\bar{n}$, then by Theorem \ref{nonerg}, 
$T^p\times T^q$ is not ergodic.
\end{proof}

Let $p$ and $q$ be natural numbers and let $r=\frac{p}{q}$.  Define 
$$\Lambda=\{ n\in \N : \vert pq_n - qp_n\vert \leq (p+q)h_n \}$$
$$=\{ n\in \N : \vert rq_n - p_n\vert \leq (1+r)h_n \}$$
and let 
$$\Lambda_0=\{ n\in \N : rq_n - p_n = 0,
\ \ \frac{q_n}{q} \in \N \}.$$
Sequences $p_n$ and $q_n$ may be chosen so that 
$\Lambda_0 =\Lambda$ is infinite.  In this case, 
$T^p\times T^q$ will be conservative, but not ergodic.  Thus we obtain the 
following corollary whose proof is similar to Corollary \ref{ratcor}. 

\begin{cor} 
If 
$R_1\subset R_2\subset \Q \cap (0,1)$, 
then there exists an 
infinite Lebesgue measure-preserving transformation $T$ such that 
$T^p\times T^q$ is conservative ergodic for $\frac{p}{q} \in R_1$, 
$T^p\times T^q$ is conservative, but not ergodic for 
$\frac{p}{q}\in R_2\setminus R_1$, and $T^p\times T^q$ is not conservative 
for $\frac{p}{q}\in \Q_0^1 \setminus R_2$.  
\end{cor}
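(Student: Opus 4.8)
The plan is to build $\{(p_n,q_n)\}$ by a single diagonal construction that interleaves three kinds of ``tasks,'' one for each target behavior, exactly as in the proof of Corollary~\ref{ratcor} but with an extra class of tasks. Enumerate $R_1=\{r_1,r_2,\dots\}$, $R_2\setminus R_1=\{t_1,t_2,\dots\}$ and $S=(\Q\cap(0,1))\setminus R_2=\{s_1,s_2,\dots\}$ (allowing finite or empty lists). First I would partition $\N=P\sqcup Q$ into two infinite sets, and further $P=\bigcup_i P(i)$, $Q=\bigcup_i Q(i)$ into infinite blocks, with $P(i)$ devoted to forcing ergodicity along $r_i$ and $Q(i)$ to forcing conservative nonergodicity along $t_i$; the directions of $S$ receive no task and will be handled purely by separation. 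At an index $n\in P(i)$, writing $r_i=\frac{p}{q}$ in lowest terms and letting $(k,\ell)=(k(n),\ell(n))$ run through all pairs via onto maps as in Corollary~\ref{ratcor}, I set $p_n=p\,t_n-k$ and $q_n=q\,t_n-\ell$ for a large integer $t_n$, so that $\frac{q_n+\ell}{q}=\frac{p_n+k}{p}=t_n\in\N$. At an index $n\in Q(i)$, writing $t_i=\frac{a}{b}$ in lowest terms, I set $p_n=a\,c_n$ and $q_n=b\,c_n$ for a large integer $c_n$, so the slope is \emph{exactly} $t_i$ and $\frac{q_n}{b}=c_n\in\N$. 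In both cases the free integer ($t_n$ or $c_n$) is the quantity I enlarge to obtain separation.

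Next I would impose the separation. Fix an increasing sequence of finite sets $\mathcal D_1\subset\mathcal D_2\subset\cdots$ of rationals with $\bigcup_n\mathcal D_n=\Q\cap(0,1)$, and at each index $n$ require, in the spirit of the quantity $\delta_{i,j}$ of Corollary~\ref{ratcor}, that the free integer be large enough that $|uq_n-wp_n|>(u+w)h_n$ for every $\rho=\frac{u}{w}\in\mathcal D_n$ \emph{different} from the slope assigned at stage $n$. Since such a $\rho$ makes the determinant $uq-wp$ (resp.\ $ub-wa$) nonzero, the quantity $|uq_n-wp_n|$ is a linear function of the free integer with nonzero leading coefficient, so one sufficiently large choice handles all finitely many competitors in $\mathcal D_n$ at once; this generalizes the single inequality $q_{n_j}\delta_{i,j}>2h_{n_j}+k(j)+\ell(j)$ of Corollary~\ref{ratcor}. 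Because $h_n$ is determined by the stages before $n$, this choice is available, and letting $t_n,c_n\to\infty$ also yields $\frac{p_n}{h_n}\to\infty$; the parameters $\ell_n,m_n$ are then chosen large enough to satisfy the defining inequalities of $\sW$, while $p_n<q_n$ holds automatically at both kinds of indices, so $v\in\sV$.

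I would then verify the three conclusions. For $r_i=\frac{p}{q}\in R_1$: by construction the indices of $P(i)$ realize every pair $(k,\ell)$ infinitely often, so Theorem~\ref{erg} gives that $T^p\times T^q$ is conservative ergodic. For $t_i=\frac{a}{b}\in R_2\setminus R_1$: the indices of $Q(i)$ give $|aq_n-bp_n|=0$ with $\frac{q_n}{b}\in\N$ infinitely often, so $T^a\times T^b$ is conservative by Corollary~\ref{cons}; and at every index outside $Q(i)$ the separation forces $|aq_n-bp_n|>(a+b)h_n$ once $t_i\in\mathcal D_n$, so $\{n:0\neq|aq_n-bp_n|\le(a+b)h_n\}$ is finite and Theorem~\ref{nonerg} yields conservative but not ergodic. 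For $s_v=\frac{u}{w}\in\Q\cap(0,1)\setminus R_2$ there is no task: since $s_v$ is never an assigned slope (the assigned slopes are the $r_i$ and $t_i$, all lying in $R_2$), the separation clause applies to $s_v$ at every index with $s_v\in\mathcal D_n$ and forces the strict inequality $|uq_n-wp_n|>(u+w)h_n$ there, so $\{n:|uq_n-wp_n|\le(u+w)h_n\}$ is finite and Theorem~\ref{nonconserv} gives that $T^u\times T^w$ is not conservative.

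The main obstacle is the interference between tasks: the indices used to \emph{force} a behavior along one direction must not accidentally create a near-coincidence of return times along a different target direction, since for the directions of $R_2\setminus R_1$ we must exclude close-but-unequal approximations and for those of $\Q\cap(0,1)\setminus R_2$ we must exclude \emph{any} approximation within $(p+q)h_n$. Controlling this simultaneously for all competitor directions is precisely what the growing sets $\mathcal D_n$ together with the single large-parameter separation inequality accomplish; the one point requiring care is that separation is demanded only against slopes \emph{different} from the one assigned at stage $n$, so that the exact matches powering the positive conditions—ergodicity along $R_1$ and the vanishing $|aq_n-bp_n|$ along $R_2\setminus R_1$—are never destroyed.
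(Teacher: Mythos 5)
Your proposal is correct and follows essentially the same route as the paper, which proves this corollary by the same interleaving scheme as Corollary~\ref{ratcor}: exact-match stages realizing every pair $(k,\ell)$ for the directions in $R_1$ (Theorem~\ref{erg}), stages with $p_n/q_n$ exactly equal to the target slope and $q_n/q\in\N$ for directions in $R_2\setminus R_1$ (i.e., the paper's choice $\Lambda_0=\Lambda$ infinite, invoking Corollary~\ref{cons} and Theorem~\ref{nonerg}), and a large-parameter separation inequality against all other competitor slopes so that Theorem~\ref{nonconserv} kills conservativity outside $R_2$. Your growing exhaustion $\mathcal D_n$ is just a repackaging of the paper's $\delta_{i,j}=\min\{|r_i-s_u|:1\leq u\leq i+j\}$ device, so the two arguments coincide in substance.
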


\section{Ergodic Index} 

We construct zero entropy transformations such that their ergodic index 
is determined by the convergence or divergence of a special series.  As in 
Kakutani and Parry \cite{KaPa63}, the ergodicity will depend on the divergence of 
a type of $p$-series where the terms being summed are 
explicit parameters in the construction of the transformations.  
However, all of the transformations constructed are partially 
rigid and hence have infinite conservative index.  The constructions in \cite{KaPa63}
are such that when the product is not ergodic it is not conservative.

\subsection{Constructions} 
We construct a family of rank-one transformations 
which we  classify by ergodic index.  
Let $L$ be a positive integer and let 
$$V_L = \{(u_1,u_2,\dots ,u_L):u_1<u_2<\dots < u_L\ \hbox{and }u_d\in{\N}\ 
\hbox{for } 1\leq d\leq L\} .$$  
We wish to define a sequence of vectors such that each vector 
$v$ in $V_L$ appears in our sequence along an arithmetic progression.  
First we list our countable set $V_L=\{v_j:j\in{\natural}\}$.  
Now we define our sequence $s:\natural\rightarrow V_L$ by
$$s(2^{j-1}+i2^j)=v_j$$
for $j=1,2,\dots$ and $i=0,1,\dots$.  
Finally we are ready to construct our rank-one transformations.

Let $r_n,n\in{\natural}$ be a nondecreasing sequence of positive integers 
satisfying $r_n>L$ for $n>0$.  Begin with column $C_1=[0,1)$.  
To obtain $C_{n+1}$: first denote $s(n)=(u_1,u_2,\dots ,u_L)$ 
and let $\sigma =u_1+\dots +u_L$.  
Cut column $C_n$ of height $h_n$ into $r_n$ subcolumns of equal width, 
and number the subcolumns from 1 to $r_n$, going from left to right. 
Place $h_n+u_d$ spacers on the $(r_n-L-1+d)$ subcolumn for $1\leq d\leq L$ 
and place $(2L+1)h_n+\sigma$ spacers on the $i^{\hbox{th}}$ 
subcolumn for $1\leq i\leq r_n-L-1$.  
Then stack each subcolumn onto the adjacent left subcolumn to 
form a column of height 
$g_n=r_nh_n+(r_n-L-1)((2L+1)h_n+\sigma )+(Lh_n+\sigma)$.  
Finally place $g_n$ spacers on top to form the column $C_{n+1}$ of height 
$h_{n+1}=2g_n$. Let $V_L^*$ be the family of all Lebesgue measure 
preserving transformations constructed in this manner. 

\subsection{Ergodic Index Characterization}

We give a criterion equivalent to the transformation 
$T \in V_L^*$ having an ergodic $k$-fold product.  
Let $X=\bigcup_{n=1}^{\infty} C_n$ and $\mu$ be Lebesgue measure 
on $X$.  Given a set $I\subset X$ of finite measure, 
let $\mu_I$ denote the measure $\mu$ conditioned on $I$. 
In particular, $\mu_I (J) = {\mu (J\cap I)} / {\mu (I)}$. 
Let $\mu^k$ denote the $k$-fold product measure on $X^k$. 
For a subset $A\subset X^k$ of finite measure, let 
$\mu_A^k$ be $\mu^k$ conditioned on $A$. 

\begin{thm} 
Let $k$ and $L$ be positive integers such that $1 < k \leq L$. 
A transformation $T \in V_L$ has an ergodic $k$-fold product 
if and only if 
$$\sum_{i=1}^{\infty}  \left({{1}\over{r_i}}\right)^k=\infty .$$
\end{thm}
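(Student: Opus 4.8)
The plan is to prove both implications by reducing ergodicity of the $k$-fold product $T^{\times k}=T\times\cdots\times T$ to a recurrence/Borel--Cantelli statement for a random walk on the relative-position lattice, and to exploit the fact that in a rank-one cutting-and-stacking construction the subcolumn a point occupies at stage $n$ is an independent coordinate (an odometer address). First I would record the standard reductions: conservativity of $T^{\times k}$ is free, since every $T\in V_L^*$ is partially rigid and hence has infinite conservative index, so only ergodicity is at issue; and by the $\tfrac{31}{32}$-density argument already used in the proof of Theorem~\ref{erg}, it suffices to show that for any two products of levels $A=A_1\times\cdots\times A_k$ and $B=B_1\times\cdots\times B_k$ with all $A_c,B_c$ levels of a common column $C_N$, there is an $m\geq 1$ with $\mu^k\big((T^{\times k})^m A\cap B\big)>0$, and with enough room to spare for the density argument. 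The geometric heart is that the bottom-to-bottom increment among the $r_n-L-1$ \emph{ordinary} subcolumns is the constant $(2L+2)h_n+\sigma$, so coordinates moving together through ordinary subcolumns keep their relative heights fixed; by contrast the last $L$ subcolumns carry $h_n+u_d$ spacers, giving increment $2h_n+u_d$, so when \emph{all} $k$ coordinates sit in special subcolumns the coarse part $2h_n$ cancels in every pairwise difference and the relative-position vector in $\Z^{k-1}$ receives a clean, bounded step built from the differences $u_d-u_{d'}$. Because $s$ runs through every vector of $V_L$ along an arithmetic progression and $k\leq L$ provides at least $k$ distinct special subcolumns, the set of realizable steps generates all of $\Z^{k-1}$.

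For the direction $\sum(1/r_i)^k=\infty\Rightarrow$ ergodic, I would view the relative position as a walk that takes a step exactly at those stages $n$ where all $k$ coordinates land in special subcolumns; a given coordinate occupies a given subcolumn with conditional measure $1/r_n$, and since these addresses are mutually independent across the $k$ coordinates and across stages, the ``mixing event'' at stage $n$ has measure $\asymp(1/r_n)^k$ and distinct stages are independent. A prescribed step is available along the progression $P_j=\{2^{j-1}(2i+1)\}$ on which $s(n)$ equals the required vector $v_j$; here the hypothesis that $r_n$ is nondecreasing is essential, because for the nonincreasing sequence $(1/r_n)^k$ the divergence of the full series forces $\sum_{n\in P_j}(1/r_n)^k=\infty$ as well. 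The second Borel--Cantelli lemma then yields infinitely many independent, directionally controllable mixing steps, so the relative-position distribution of a positive-measure set of $k$-tuples spreads to put positive mass at the relative configuration demanded by $B$; aligning the first coordinate with $B_1$ by a single power $m$ then produces the overlap $\mu^k\big((T^{\times k})^m A\cap B\big)>0$, giving conservative ergodicity.

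For the converse, if $\sum(1/r_i)^k<\infty$ the first Borel--Cantelli lemma shows that for almost every $k$-tuple only finitely many stages ever have all $k$ coordinates simultaneously in special subcolumns. Since the ordinary subcolumns carry equal spacers and hence never change the relative fine position, the relative-position vector of almost every $k$-tuple is eventually constant; this limiting value is a non-constant $T^{\times k}$-invariant function, so $T^{\times k}$ is not ergodic.

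The step I expect to be the main obstacle is making the coarse/fine decomposition rigorous and stage-uniform. Because $h_n\to\infty$, an increment that is coarse at stage $n$ becomes comparatively fine later, so I must track the separations modulo the growing lattice generated by the $h_n$, absorb the fixed initial offsets of the $A_c$ and target offsets of the $B_c$ (the analogues of $k,k'$ in Theorem~\ref{erg}) into the realizable steps $u_d-u_{d'}\in\Z$ uniformly in $n$, and verify that the common power $m$ can always be chosen below $h_{n+1}$ so that no coordinate wraps and the cancellation of the $2h_n$ term is genuine. In effect this is a local-limit/recurrence estimate for the relative walk --- exactly the point at which the series $\sum(1/r_i)^k$ enters, playing the role the special series played in Kakutani and Parry \cite{KaPa63} --- while the Borel--Cantelli halves are then routine given the independence of the subcolumn addresses.
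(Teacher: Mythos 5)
Your outline reproduces the paper's mechanism for the ``if'' half in spirit (independent subcolumn addresses, simultaneous-special events of measure $(1/r_n)^k$, divergence along the dyadic progressions via monotonicity of $r_n$ --- that last observation is correct and is exactly what the paper uses implicitly), but both halves have gaps, and the one in the converse is fatal as written. Your ``only if'' argument rests on the claim that once the all-$k$-coordinates-special events stop, the relative-position vector is eventually constant and defines a nonconstant invariant function. That is false: the relative position also changes at the far more frequent \emph{mixed} stages. If two coordinates sit in different ordinary subcolumns, their height difference picks up a multiple of the ordinary increment $(2L+2)h_n+\sigma$; if one is ordinary and one special, or if they occupy distinct special subcolumns $d<d'$, the difference changes by quantities like $2(d'-d)h_n+\sum_{d<e\le d'}u_e$ --- amounts of order $h_n$ that are not multiples of any fixed coarse unit, and, as you yourself note, what is coarse at stage $n$ is not a multiple of the stage-$(n+1)$ unit. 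In the sharp regime (say $k=2$, $r_n=n$, so $\sum r_n^{-2}<\infty$ but $\sum r_n^{-1}=\infty$) almost every tuple experiences infinitely many mixed stages, so no ``eventual fine relative position'' exists; the coarse/fine decomposition you flag as the main obstacle \emph{is} the theorem, and you leave it open. The paper avoids invariant functions entirely: it takes $A=I_1\times\cdots\times I_1$ and $B=(I_1\times\cdots\times I_1\times I_2)\setminus\bigcup_m(\mathcal{R}_m\times\cdots\times\mathcal{R}_m)$, which has positive measure precisely because $\sum(L/r_m)^k<\infty$, and proves by induction on $m$ that $\mu^k(T_k^iA\cap B)=0$ for $|i|\le h_m$, via a case analysis on $T^i\mathcal{L}_m$ versus $T^i\mathcal{R}_m$ that exploits the engineered spacer hierarchy (ordinary spacers $(2L+1)h_m+\sigma$ dwarf the special spacers $h_m+u_d<2h_m$, and the block of $g_m$ spacers caps the column) to annihilate every mixed configuration, plus a translation-invariance among ordinary subcolumns to reduce the remaining aligned case to the inductive hypothesis.

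The ``if'' half also has a real, though repairable, gap at the simultaneity step. The second Borel--Cantelli lemma gives that almost every tuple lies in infinitely many of the events $E_i=A\cap T_k^{-t(i)}B$, hence $\mu^k(E\cap E_i)>0$ for infinitely many $i$; but that only lands a sliver of $E$ somewhere in $B$, and since $F$ has density only $3/4$ in $B$, that sliver can miss $F$ entirely. You need a single time $t(i)$ at which $E$ fills more than half of $E_i$ \emph{and} $F$ fills more than half of $F_i=B\cap T_k^{t(i)}A$; then, since $T_k^{t(i)}E_i=F_i$, two majority subsets of $F_i$ must intersect. This quantitative, common-time statement is exactly the paper's Mixing Lemma (a Cauchy--Schwarz second-moment estimate, which needs only pairwise independence of the $E_i$ --- conveniently, since pairwise independence is what the Independence Lemma delivers) together with the Double Mixing Lemma, which reruns the estimate along the subsequence of good times for $E$, whose measures still sum to infinity. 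Your phrase ``spreads to put positive mass at the configuration demanded by $B$, then align by a single power'' elides precisely this double-density step, so as written the deduction $\mu^k(T_k^tE\cap F)>0$ is not justified, though an expert could fill it in by the paper's route.
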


\noindent 
If we let $r_n$ be a sequence of positive integers such that 
\[\sum_{i=1}^{\infty}  ({{1}\over{r_i}})^k=\infty\text{ but }
 \sum_{i=1}^{\infty}  ({{1}\over{r_i}})^{k+1}<\infty,\]  
then the theorem implies that $T$ has ergodic index $k$.  

\begin{cor} 
There exist rank-one transformations 
with ergodic index $k$ for any positive integer $k$.
\end{cor}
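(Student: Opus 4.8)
The plan is to produce, for each positive integer $k$, an explicit admissible parameter sequence $\{r_n\}$ so that the preceding theorem pins the ergodic index to exactly $k$. Recall that ergodic index $k$ means the $k$-fold product of $T$ is ergodic while the $(k+1)$-fold product is not. For $k\geq 2$ I would fix $L=k+1$ and choose a nondecreasing sequence of integers $r_n>L$ satisfying the two-sided summability condition
\[
\sum_{i=1}^{\infty}\Big(\frac{1}{r_i}\Big)^{k}=\infty
\qquad\text{and}\qquad
\sum_{i=1}^{\infty}\Big(\frac{1}{r_i}\Big)^{k+1}<\infty .
\]
Applying the theorem with parameter $k$ (valid since $1<k\leq L$) converts divergence of the first series into ergodicity of the $k$-fold product, and applying it with parameter $k+1$ (valid since $1<k+1\leq L$) converts convergence of the second series into non-ergodicity of the $(k+1)$-fold product. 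Together these give ergodic index exactly $k$.

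The only thing to verify is that such a sequence exists, which is elementary. I would take $r_n=\max\{L+1,\lceil n^{1/k}\rceil\}$. This is nondecreasing and satisfies $r_n>L$, and the maximum alters only finitely many terms, so the convergence of both series is governed by the tail on which $r_n=\lceil n^{1/k}\rceil$. The estimates $n^{1/k}\leq \lceil n^{1/k}\rceil\leq 2n^{1/k}$ give $(1/r_n)^{k}\geq 2^{-k}n^{-1}$, forcing the first series to diverge, and $(1/r_n)^{k+1}\leq n^{-(k+1)/k}=n^{-1-1/k}$, forcing the second to converge since $1+\tfrac1k>1$. No further estimates are needed.

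The case $k=1$ lies outside the theorem's range $1<k\leq L$, so I would treat it separately: invoking the standard fact that every rank-one transformation is ergodic gives that any $T\in V_2^{*}$ has ergodic index at least $1$, and then choosing $L=2$ with $\sum_i(1/r_i)^{2}<\infty$ (for example $r_n=n+2$) and applying the theorem with parameter $k=2$ shows $T\times T$ is not ergodic, so the index equals $1$. I anticipate no genuine obstacle here: all of the dynamical content is already carried by the theorem, and what remains is the construction of a sequence straddling the two summability thresholds together with the bookkeeping needed to apply the theorem simultaneously at $k$ and $k+1$; the only point deserving care is this separate handling of $k=1$.
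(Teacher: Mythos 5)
Your proof is correct and takes essentially the same route as the paper, which obtains the corollary precisely by choosing $r_n$ with $\sum_{i}(1/r_i)^k=\infty$ but $\sum_{i}(1/r_i)^{k+1}<\infty$ and applying the theorem at both $k$ and $k+1$. You only make explicit some details the paper leaves implicit: the need to take $L\geq k+1$ so the theorem applies at the parameter $k+1$, a concrete admissible sequence $r_n$, and the separate handling of $k=1$ via the ergodicity of rank-one transformations.
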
 

\noindent {\bf Proof of ``if":} 
Now we set out to prove if 
$\sum_{i=1}^{\infty}  ({{1}\over{r_i}})^k=\infty$, 
then the $k$-fold product of $T$ is ergodic.  
First we state the Independence Lemma which follows directly 
from the construction.  The Independence Lemma coupled with the 
Borel-Cantelli Lemma imply that a product of levels sweeps out 
another product of levels.  Second we pair iterates to show that 
certain key iterates mix simultaneously with arbitrary measurable sets.  
This is accomplished in our Double Mixing Lemma.  
Finally we apply this in our Bumping Application 
to show that any set $E$ of positive measure ``bumps" any set 
$F$ of positive measure.

Given the column $C_n$, number the levels from top to bottom from 1 to $h_n$ respectively.  Now we are ready to state the Independence Lemma.

\begin{lem}
\label{indlem}
(Independence Lemma)  Let $n$ be a postive integer.  
Suppose $I$ and $J$ are levels in $C_n$.  
Let vector $v_j=(u_1,u_2,\dots ,u_L)$ satisfy $u_L<h_n$. 
Define the following sequence, 
$$t(i) = 2h_{\ell}$$
where 
$$\ell = \ell (i) = 2^{j-1}+(i+n)2^j .$$
Thus the sets $T^{-t(i)}J$, $i\in{\N}$ are 
independent with respect to $\mu_I$.  
Moreover, the sets $T^{t(i)}I$, $i\in{\N}$ 
are independent with respect to $\mu_J$.  
\end{lem}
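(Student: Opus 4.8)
The plan is to reduce the statement to the fundamental independence of the subcolumn choices in a cutting-and-stacking construction. For a point $x$ in the fixed level $I$ of $C_n$ and a stage $m\ge n$, let $c_m(x)\in\{1,\dots,r_m\}$ record which of the $r_m$ subcolumns of $C_m$ contains $x$ when $C_{m+1}$ is formed (recall $x\in I\subset C_n\subset C_m$). Since at each stage every level of $C_m$ is cut into $r_m$ subintervals of equal width, the map $x\mapsto(c_n(x),c_{n+1}(x),\dots)$ carries the normalized measure $\mu_I$ to the product of the uniform measures on the sets $\{1,\dots,r_m\}$; in particular the coordinates $c_m$ are mutually independent and uniform with respect to $\mu_I$. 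Writing $A_i=I\cap T^{-t(i)}J$, it therefore suffices to show that $A_i$ lies in the $\sigma$-algebra generated by the single coordinate $c_{\ell(i)}$, because the stages $\ell(0)<\ell(1)<\cdots$ are pairwise distinct and the corresponding coordinates are independent.

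Next I would carry out the local geometric computation at a stage $\ell=\ell(i)$. By the definition of $s$ we have $s(\ell)=v_j=(u_1,\dots,u_L)$, so in forming $C_{\ell+1}$ the subcolumn $r_\ell-L-1+d$ (for $1\le d\le L$) carries exactly $h_\ell+u_d$ spacers on top, while subcolumns $1,\dots,r_\ell-L-1$ carry $(2L+1)h_\ell+\sigma$ spacers and subcolumn $r_\ell$ carries none until the $g_\ell$ spacers are placed at the very top. Comparing the bottom of the copy of $C_\ell$ in subcolumn $i$ with the bottom of the copy in subcolumn $i+1$, the vertical gap equals $2h_\ell+u_d$ for a special subcolumn $i=r_\ell-L-1+d$ and $(2L+2)h_\ell+\sigma$ for a big-spacer subcolumn. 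Hence $T^{2h_\ell}$ carries a point at $C_\ell$-level $P$ in a special subcolumn $i$ to $C_\ell$-level $P-u_d$ in subcolumn $i+1$ whenever $P\ge u_d$, and it carries every point of a big-spacer subcolumn, of the last subcolumn, or of a special subcolumn with $P<u_d$ into a spacer inserted at stage $\ell$. This is exactly the sense in which $t(i)=2h_\ell$ realizes the shift governed by $v_j$.

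Finally I would use the hypothesis $u_L<h_n$ to localize this shift. Let $\iota_I,\iota_J\in\{0,\dots,h_n-1\}$ denote the heights of $I$ and $J$ inside a copy of $C_n$. Since every copy of $C_n$ sits as a block of $h_n$ consecutive $C_\ell$-levels and is separated from the block below by at least $h_n$ spacer levels, and since $u_d\le u_L<h_n$, moving down by $u_d$ from a point of $C_n$-height $\iota_I$ either stays in the same copy at $C_n$-height $\iota_I-u_d$ (when $\iota_I\ge u_d$) or lands in a spacer (when $\iota_I<u_d$); crucially, whether the image lies in $J$ depends only on $\iota_I$, $u_d$, and $\iota_J$, not on the earlier choices $c_n,\dots,c_{\ell-1}$ that merely position the copy inside $C_\ell$. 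Combining this with the previous paragraph, for $x\in I$ one has $T^{t(i)}x\in J$ if and only if $c_{\ell(i)}(x)=r_{\ell(i)}-L-1+d_0$, where $d_0$ is the unique index with $u_{d_0}=\iota_I-\iota_J$ (and $A_i=\varnothing$ when $\iota_I-\iota_J\notin\{u_1,\dots,u_L\}$). Thus $A_i$ is measurable with respect to $c_{\ell(i)}$, and independence across $i$ follows. The assertion about $\{T^{t(i)}I\}$ relative to $\mu_J$ is obtained symmetrically by applying $T^{-2h_\ell}$, which moves special subcolumns upward by $u_d$ and yields the identical condition $u_{d_0}=\iota_I-\iota_J$. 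I expect the main obstacle to be the geometric bookkeeping of this last step: verifying the gap computation for all four subcolumn types and controlling the boundary copies at the bottom and top of $C_{\ell+1}$, since it is precisely there that $u_L<h_n$ and the choice $t(i)=2h_\ell$ must be invoked to guarantee that $A_i$ depends on the single coordinate $c_{\ell(i)}$ alone.
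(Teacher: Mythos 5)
Your proof is correct and follows exactly the route the paper intends: the paper gives no proof of this lemma at all, asserting only that it ``follows directly from the construction,'' and your argument is precisely that direct verification. Reducing $\mu_I$ to the product of uniform measures on the subcolumn coordinates $c_m$, and then checking via the gap computation (with the separation of $C_n$-copies by more than $h_n$ spacers and $u_L<h_n$) that $I\cap T^{-t(i)}J$ is exactly the event $\{c_{\ell(i)}=r_{\ell(i)}-L-1+d_0\}$ with $u_{d_0}=\iota_I-\iota_J$ (empty if no such $d_0$ exists), is sound, and the symmetric statement for $\mu_J$ follows as you say since $T^{t(i)}$ carries that event onto $J\cap\{c_{\ell(i)}=r_{\ell(i)}-L+d_0\}$.
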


\noindent The next lemma utilizes independence to obtain a single 
``mixing time''.  
Our technique is a spinoff of the Blum-Hansen method found in Friedman \cite{Fr83}.  
Then we prove the Double Mixing Lemma which is sufficient to give us the 
Bumping Application and complete 
our proof of ``if''.

\begin{lem} (Mixing Lemma) 
Let $(X,\gamma)$ be a probability space.  
Let $E_i\subset{X}$ be a sequence of pairwise independent sets satisfying 
$$\sum_{i=1}^{\infty}\gamma (E_i)=\infty .$$ 
Given any set $E\subset X$ and $\varepsilon >0$, there exists a 
positive integer $i$ such that 
$\gamma (E\cap E_i)>(\gamma (E)-\varepsilon )\gamma (E_i)$. 
\end{lem}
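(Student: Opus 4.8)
The plan is to argue by contradiction using a second moment (variance) estimate, which is the analytic core of the Blum--Hansen method. Suppose the conclusion fails, so that
\[
\gamma(E\cap E_i)\leq (\gamma(E)-\varepsilon)\,\gamma(E_i)\qquad\text{for every }i.
\]
For each $N$ I would set $S_N=\sum_{i=1}^N \charf_{E_i}$ and $A_N=\sum_{i=1}^N\gamma(E_i)$; since $\sum_{i=1}^\infty\gamma(E_i)=\infty$ we have $A_N\to\infty$, and this divergence is the only property of the hypothesis that the argument will ultimately exploit. The key device is to \emph{center} the indicator of $E$: put $g=\charf_E-\gamma(E)$, so that $\int g\,d\gamma=0$ and $\int g^2\,d\gamma=\gamma(E)(1-\gamma(E))\leq \tfrac14$.

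The heart of the proof is to estimate the pairing $\int g\,S_N\,d\gamma$ in two different ways. On one side, since $\int g\,\charf_{E_i}\,d\gamma=\gamma(E\cap E_i)-\gamma(E)\gamma(E_i)$, the contradiction hypothesis gives the combinatorial bound
\[
\int g\,S_N\,d\gamma=\sum_{i=1}^N\bigl(\gamma(E\cap E_i)-\gamma(E)\gamma(E_i)\bigr)\leq -\varepsilon A_N .
\]
On the other side, because $g$ has mean zero we may write $\int g\,S_N\,d\gamma=\int g\,(S_N-A_N)\,d\gamma$, so Cauchy--Schwarz yields
\[
\Bigl|\int g\,S_N\,d\gamma\Bigr|\leq \Bigl(\int g^2\,d\gamma\Bigr)^{1/2}\bigl(\mathrm{Var}\,S_N\bigr)^{1/2}.
\]
Here pairwise independence is exactly what controls the variance: expanding $S_N^2$ and using $\gamma(E_i\cap E_j)=\gamma(E_i)\gamma(E_j)$ for $i\neq j$ gives $\mathrm{Var}\,S_N=A_N-\sum_{i=1}^N\gamma(E_i)^2\leq A_N$. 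Combining this with $\int g^2\,d\gamma\leq\tfrac14$ produces the bound $|\int g\,S_N\,d\gamma|\leq \tfrac12\sqrt{A_N}$. Putting the two estimates together forces $\varepsilon A_N\leq \tfrac12\sqrt{A_N}$, i.e. $A_N\leq (2\varepsilon)^{-2}$, which contradicts $A_N\to\infty$. Hence some $i$ must satisfy $\gamma(E\cap E_i)>(\gamma(E)-\varepsilon)\gamma(E_i)$.

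The one subtlety worth flagging, and the step I expect to require the most care, is the choice to center. A direct lower bound for $\int_E S_N\,d\gamma$ obtained by applying Cauchy--Schwarz on the complement $E^c$ only yields the weaker ratio $\gamma(E)/(1+\sqrt{1-\gamma(E)})<\gamma(E)$, which is not enough to beat $\gamma(E)-\varepsilon$ when $\gamma(E)$ is small. Replacing $\charf_E$ by the mean-zero function $g$ converts the estimate into a genuine variance bound that stays $O(\sqrt{A_N})$, while the combinatorial side grows like $A_N$; the growing gap between these two, driven entirely by the divergence of $\sum_i\gamma(E_i)$, is what makes the sharp constant $\gamma(E)-\varepsilon$ accessible.
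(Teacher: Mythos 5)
Your proof is correct and follows essentially the same route as the paper's: the identical second-moment computation (pairwise independence kills the covariances, giving $\mathrm{Var}\,S_N\leq A_N$) combined with Cauchy--Schwarz against the centered sum, which is exactly the Blum--Hansen estimate the paper uses. Your two cosmetic departures --- framing it as a contradiction rather than letting the ratio tend to zero, and centering the indicator of $E$ (the paper pairs the uncentered indicator against the already mean-zero sum $S_N-A_N$, yielding the same integral, so the centering you flag as the key subtlety changes only the constant $\tfrac12$ versus $\sqrt{\gamma(E)}\leq 1$) --- do not alter the argument.
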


\noindent {\bf Proof:}  By squaring the integrand and applying independence, 
we get the following, 
$$\int (\frac1{N} \sum_{i=1}^{N}
({\mathcal X}_{E_i}-\gamma (E_i)))^2 d\gamma 
= \frac1{N^2} \sum_{i=1}^{N} \gamma (E_i) (1 - \gamma (E_i)) 
< \frac1{N^2} \sum_{i=1}^{N} \gamma (E_i) .$$
The Cauchy-Schwartz inequality implies 
\begin{align*}
| \frac1{N} \sum_{i=1}^{N} (\gamma (E\cap E_i)-\gamma (E)\gamma (E_i)) |
&= | \int_{E} (\frac1{N} \sum_{i=1}^{N}({\mathcal X}_{E_i}-\gamma (E_i)))d\gamma | \\ 
&< \frac1N \sqrt{\sum_{i=1}^{N} \gamma (E_i)}.
\end{align*}

Thus for $\varepsilon>0$,
$$\frac{\sum_{i=1}^{N} 
(\gamma (E\cap E_i)-\gamma (E)\gamma (E_i))}{\sum_{i=1}^{N} 
\gamma (E_i)}<
\frac{\sqrt{\sum_{i=1}^{N} \gamma (E_i)}}{\sum_{i=1}^{N} 
\gamma (E_i)} \to 0$$
as $N\to \infty$, since $\sum_{i=1}^{\infty} 
\gamma (E_i)=\infty.\ \ \ \ \Box$

In our examples, $\gamma=\mu^k_A$ and 
$E_i=A\cap T_k^{-t(i)}B$ where $A$ and $B$ are products of levels, 
and $T_k$ is the $k$-fold Cartesian product of $T$. 
The following lemma applies to $\mu^k_A$, $A\cap T_k^{-t(i)}B$, 
and $\nu=\mu^k_B$ and $F_i=T_k^{t(i)}A\cap B$.

\begin{lem} (Double Mixing Lemma) 
Let $(X,\gamma)$ and $(Y,\nu)$ be probability spaces.  
Suppose $(E_i)_{i=1}^{\infty}\subset{X}$ and $(F_i)_{i=1}^{\infty}
\subset{Y}$ 
are each a sequence of pairwise independent sets such that 
$\gamma (E_i)=\nu (F_i)$ 
and $\sum_{i=1}^{\infty}\gamma (E_i)=\sum_{i=1}^{\infty}\nu (F_i)=\infty$.  
Then given sets $E\subset X$ and $F\subset Y$ and $\varepsilon>0$, 
there exists a positive integer $i$ such that both $\gamma (E\cap E_i)>
(\gamma (E)-\varepsilon )\gamma (E_i)$ 
and 
$\nu (F\cap F_i)>(\nu (F)-\varepsilon )\nu (F_i)$.
\end{lem}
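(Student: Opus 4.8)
The plan is to show that, for each of the two families, the indices that \emph{fail} the desired estimate carry only finite total weight, whereas the full index set carries infinite weight; any index avoiding both failure sets then works for $E$ and $F$ simultaneously. Write $w_i=\gamma(E_i)=\nu(F_i)$ and set
$$a_i=\gamma(E\cap E_i)-\gamma(E)\,w_i,\qquad b_i=\nu(F\cap F_i)-\nu(F)\,w_i,$$
so that the two conclusions read $a_i>-\varepsilon w_i$ and $b_i>-\varepsilon w_i$. Call $i$ \emph{$E$-bad} if $a_i\le-\varepsilon w_i$ and \emph{$F$-bad} if $b_i\le-\varepsilon w_i$; it suffices to produce an index that is neither.

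First I would record the orthogonality coming from independence. Since the $E_i$ are pairwise independent, the centered indicators $\psi_i=\mathcal X_{E_i}-w_i$ satisfy $\int_X\psi_i\,d\gamma=0$ and $\int_X\psi_i\psi_j\,d\gamma=0$ for $i\neq j$, with $\|\psi_i\|_{L^2(\gamma)}^2=w_i(1-w_i)$; thus $\{\psi_i\}$ is an orthogonal system in $L^2(X,\gamma)$. The crucial observation is that $a_i=\int_X\mathcal X_E\,\psi_i\,d\gamma=\langle \mathcal X_E,\psi_i\rangle$, so Bessel's inequality yields
$$\sum_{i}\frac{a_i^2}{w_i(1-w_i)}\le \|\mathcal X_E\|_{L^2(\gamma)}^2=\gamma(E).$$
(Indices with $w_i\in\{0,1\}$ are degenerate: there $a_i=0$, so such an index is automatically not $E$-bad and I would simply discard it.) The identical computation on $(Y,\nu)$ gives $\sum_i b_i^2/\big(w_i(1-w_i)\big)\le\nu(F)$.

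Next I would convert these into bounds on the weight of the bad sets. For an $E$-bad index one has $a_i^2\ge\varepsilon^2w_i^2$, hence $a_i^2/\big(w_i(1-w_i)\big)\ge a_i^2/w_i\ge\varepsilon^2 w_i$; summing over $E$-bad indices and using the Bessel bound gives $\sum_{i\ E\text{-bad}}w_i\le\varepsilon^{-2}\gamma(E)<\infty$, and likewise $\sum_{i\ F\text{-bad}}w_i\le\varepsilon^{-2}\nu(F)<\infty$. Since by hypothesis $\sum_i w_i=\infty$, the union of the two bad sets carries only finite total weight, so it omits all but finitely much weight of the index set. Choosing any $i$ outside both bad sets then gives $\gamma(E\cap E_i)>(\gamma(E)-\varepsilon)w_i$ and $\nu(F\cap F_i)>(\nu(F)-\varepsilon)w_i$ at once.

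The point to get right — and the reason the single Mixing Lemma is \emph{not} enough — is that its averaged estimate $\frac1N\sum_{i\le N}a_i\to0$ does not control individual bad indices, since large negative $a_i$ can be cancelled by positive ones; two ``infinitely often good'' index sets need not intersect. The second-moment/Bessel bound is exactly what upgrades the control to a \emph{summability} statement for the bad weights, and the shared weights $w_i=\gamma(E_i)=\nu(F_i)$ are what let the single divergence hypothesis $\sum_i w_i=\infty$ defeat both finite bad-weight bounds simultaneously. The only remaining care is the degenerate cases $w_i\in\{0,1\}$ noted above.
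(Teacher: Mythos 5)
Your proof is correct, but it takes a genuinely different route from the paper's. The paper proves the lemma by iterating its Mixing Lemma: letting $\Lambda=\{i:\gamma(E\cap E_i)>(\gamma(E)-\varepsilon)\gamma(E_i)\}$, the Mixing Lemma forces $\sum_{i\in\Lambda}\gamma(E_i)=\infty$ (if the good set had finite weight, applying the lemma to the complementary subfamily --- still pairwise independent, still of divergent weight --- would produce a good index inside the bad set, a contradiction); since $\nu(F_i)=\gamma(E_i)$, the subfamily $(F_i)_{i\in\Lambda}$ again satisfies the hypotheses, and a second application of the Mixing Lemma yields an $i\in\Lambda$ that is also good for $F$. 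You replace this sequential-restriction argument by a symmetric, quantitative one: treating the centered indicators $\psi_i=\mathcal X_{E_i}-w_i$ as an orthogonal system and applying Bessel's inequality, you show the $E$-bad indices carry total weight at most $\varepsilon^{-2}\gamma(E)$ and the $F$-bad ones at most $\varepsilon^{-2}\nu(F)$, so divergence of $\sum_i w_i$ leaves an index outside both bad sets. Both proofs are at bottom second-moment arguments (the paper's Mixing Lemma proof is Cauchy--Schwarz on the same orthogonal system), but yours is self-contained, upgrades the paper's Ces\`aro-averaged estimate to a summability statement for the bad weights, and extends verbatim to any finite number of families sharing the weights $w_i$; the paper's version buys economy by reusing the already-proved lemma, and your closing remark correctly identifies why a naive double application of the Mixing Lemma's bare statement would not suffice. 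One small slip in your degenerate-case parenthetical: under your definition an index with $w_i=0$ \emph{is} $E$-bad (indeed $a_i=0\le -\varepsilon\cdot 0$, and the strict conclusion genuinely fails at such an index), not ``automatically not $E$-bad''; but since those indices are weightless, discarding them is exactly the right move, and for $w_i=1$ either only finitely many occur (harmless) or infinitely many do, in which case any one of them satisfies both conclusions outright because then $\gamma(E\cap E_i)=\gamma(E)>\gamma(E)-\varepsilon$ and $\nu(F\cap F_i)=\nu(F)>\nu(F)-\varepsilon$. This bookkeeping point does not affect the validity of your argument.
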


\noindent {\bf Proof:}  
Let $\Lambda=\{i:\gamma (E\cap E_i)>(\gamma (E)-\varepsilon)\gamma (E_i)\}$.  
The Mixing Lemma implies that 
$$\sum_{i\in \Lambda} \nu (F_i)=\sum_{i\in \Lambda} \gamma (E_i)=\infty.$$
Therefore by applying the Mixing Lemma once again we get that 
there exists $i\in \Lambda$ such that 
$\nu (F\cap F_i)>(\nu (F)-\varepsilon )\nu (F_i).\ \ \ \ \Box$

\smallskip

\noindent {\bf Bumping Application}:  The transformation $T_k$ is ergodic.
\vskip .1in
\noindent {\bf Proof}:  
Let $E$ and $F$ be sets of positive measure in the $k$-fold product space.  
Choose a positive integer $n$ and levels 
$A_1,\dots ,A_k,B_1,\dots ,B_k$ in $C_n$ so that \[\mu^k 
(E\cap (A_1\times \dots \times A_k))>{3\over 4}\prod_{i=1}^k
\mu (A_i)\]
and 
\[\mu^k (F\cap (B_1\times \dots \times B_k))
>{3\over 4}\prod_{i=1}^k\mu (B_i).\]
By choosing a larger $n$ if necessary, we may assume $A_i$ appears 
above $B_i$ in $C_n$. 
Choose the vector 
$v_j=(u_1,u_2,\dots , u_L)$ satisfying both $u_L<h_n$ and 
for each $p\in{\N}$ there exists $d\in{\N}$ such that 
$u_d=$(position of $A_p)-($position of $B_p$).  
As in Lemma \ref{indlem}, let $t(i)=2h_{\ell}$ where 
$\ell = \ell (i) = 2^{j-1}+(i+n)2^j$. 
The Independence Lemma implies for each $p$, 
$A_p\cap T_k^{-t(i)}B_p$, $i\in{\natural}$ are independent with respect to 
$\mu_{A_p}$.  Thus
$E_i=A\cap T_k^{-t(i)}B$, $i\in{\natural}$ are independent 
with respect to $\mu^k_A$.  Similarly $F_i=B\cap T_k^{t(i)}A$, 
$i\in{\natural}$ are independent with respect to $\mu^k_B$.   
Since 
$$\sum_{i=1}^{\infty}( {1} / {r_{\ell (i)}})^k =\infty ,$$ 
then 
$$\sum_{i=1}^{\infty}\mu_A^k (E_i)=\sum_{i=1}^{\infty}\mu_B^k (F_i)=\infty.$$ 
Therefore by the Double Mixing Lemma with $\gamma=\mu^k_A$, 
$\nu=\mu^k_B$ and $\varepsilon=\frac14$, there exists a positive integer 
$i$ such that both $\mu^k_A (E\cap E_i)>\frac12 \mu^k_A (E_i)$ 
and $\mu^k_B (F\cap F_i)>\frac12 \mu^k_B (F_i)$.  Hence 
$\mu^k(T^{t(i)}E\cap F)>0.\ \ \ \ \Box$

\vskip .6in

\noindent {\bf Proof of ``only if'':}  
Choose $n\geq 2$ so that 
\[\sum_{i=n}^{\infty}\left({{L+1}\over{r_i}}\right)^k<1.\]
For $1\leq i\leq h_n$ let $I_i$ denote the $i^{th}$ level of $C_n$ 
from top to bottom.  For $m\geq n$ and $1\leq j\leq r_m$, 
let $C_{m,j}$ be th $j^{th}$ subcolumn of $C_m$.  Denote 
$${\mathcal R}_m=\bigcup_{i=r_n-L}^{r_n}C_{m,i}\ \ \hbox{and}\ \ {\mathcal L}_m
=C_m\setminus {\mathcal R}_m.$$
Define $A=I_1\times \dots \times I_1$ and 
$B=[I_1\times \dots I_1\times I_2]\setminus [\bigcup_{m=n}^{\infty}
({\mathcal R}_m\times \dots \times {\mathcal R}_m).$
Thus 
$$\mu^k(B)=[\prod_{m=n}^{\infty}(1-({L\over{r_m}})^k)]\mu(I_1)^k
\geq (1-\sum_{m=n}^{\infty}({L\over{r_m}})^k))\mu(I_1)^k>0.$$
We will prove inductively on $m$ that 
$$\mu^k(T_k^iA\cap B)=0$$
for $-h_m\leq i\leq h_m$.  Since $A$ and $B$ are disjoint and 
each is a product of levels from $C_n$, then $\mu^k(T_k^iA\cap B)=0$ 
for $-h_n\leq i\leq h_n$.  Now suppose $\mu^k(T_k^iA\cap B)=0$ for 
$-h_m\leq i\leq h_m$.  

Consider the four intersections:  
$T^i{\mathcal L}_m\cap {\mathcal L}_m$, $T^i{\mathcal L}_m\cap {\mathcal R}_m$, 
$T^i{\mathcal R}_m\cap {\mathcal L}_m$, and $T^i{\mathcal R}_m\cap {\mathcal R}_m$.  

First $\mu (T^i{\mathcal R}_m\cap {\mathcal L}_m)=0$ for $h_m\leq i\leq h_{m+1}$.

Second if $\mu (T^i{\mathcal R}_m\cap {\mathcal R}_m)>0$ 
then $\mu (T^i{\mathcal L}_m\cap C_m)=0$ for $h_m\leq i\leq h_{m+1}$.  
Since $B\cap({\mathcal R}_m\times \dots \times {\mathcal R}_m)=\emptyset$, 
we need only consider the intersections:  $T^i{\mathcal L}_m\cap {\mathcal L}_m$ and 
$T^i{\mathcal L}_m\cap {\mathcal R}_m$.

Suppose $\mu (T^i{\mathcal L}_m\cap {\mathcal R}_m)>0$.  
If $\mu (T^i{\mathcal L}_m\cap ({\mathcal R}_m\setminus C_{m,r_n-L+1}))>0$ 
then $\mu (T^i{\mathcal L}_m\cap {\mathcal L}_m)=0$.  
Since $B\cap ({\mathcal R}_m \times \dots \times {\mathcal R}_m)=\emptyset$, 
we may assume $\mu (T^i{\mathcal L}_m\
\cap ({\mathcal R}_m \setminus C_{m,r_n-L+1}))=0$.  

Finally consider the case 
$\mu (T^i{\mathcal L}_m\cap ({\mathcal L}_m\cup C_{m,r_n-L+1}))>0$.  
For $j\leq r_n-L$ and for $p\geq j$, $T^iC_{m,j-1}$ overlaps 
the same number of levels in $C_{m,p-1}$ as $T^iC_{m,j}$ 
overlaps in $C_{m,p}$.  Hence this scenario reduces 
to the case $-h_m\leq i\leq h_m$.  
Therefore $\mu (T_k^iA\cap B)=0$ for $0\leq i \leq h_{m+1}$.  
The case $-h_{m+1}\leq i $ can be handled in a similar manner.  
$\Box$

\subsection*{Acknowledgements}
We would like to thank Nat Friedman for 
  discussions at the early stages of  this work.   We thank James Wilcox
  for typesetting  our figure in tikz.

\normalsize

\bibliographystyle{amsalpha}
\bibliography{MixingRankOneBib}

\end{document}